\documentclass[a4paper,11pt]{amsart}

\usepackage{geometry}
\usepackage{amssymb,mathrsfs,amsmath,amsthm}
\usepackage{enumitem}
\usepackage{autobreak}
\allowdisplaybreaks
\usepackage{bm}

\usepackage{graphicx}
\usepackage{color}
\usepackage{booktabs}
\usepackage{hyperref}
\usepackage{cleveref}

\newtheorem{theorem}{Theorem}[section]
\newtheorem{lemma}{Lemma}[section]
\newtheorem{proposition}{Proposition}[section]

\newtheorem{remark}{Remark}[section]

\numberwithin{equation}{section}
\numberwithin{figure}{section}

\makeatletter
\@namedef{subjclassname@2020}{\textup{2020} Mathematics Subject Classification}
\makeatother

\begin{document}

\title{Overdetermined problem for optimal transportation}


\author{Qing Zhao}
\address{School of Mathematics and Shing-Tung Yau Center, Southeast University, Nanjing 211189, P. R. China}
\email{zhao-qing@seu.edu.cn}

\author{Feida Jiang$^*$}
\address{School of Mathematics and Shing-Tung Yau Center, Southeast University, Nanjing 211189, P. R. China; Shanghai Institute for Mathematics and Interdisciplinary Sciences, Shanghai 200433, P. R. China}
\email{jiangfeida@seu.edu.cn}

\subjclass[2020]{35N25; 49Q25; 35J96}

\date{\today}
\thanks{$^*$ Corresponding author: Feida Jiang.}


\keywords{overdetermined problem, optimal transportation, Monge-Amp\`ere equation.}

\begin{abstract}
In this paper, we establish symmetry results for solutions of overdetermined problems arising in optimal transportation. These problems involve the Monge-Amp\`{e}re equation with a Dirichlet boundary condition $u=0$ on $\partial \Omega$ and the natural boundary condition $Du(\Omega)=\Omega^{*}$. We show that symmetry holds when the target is the unit ball $B$. For more general target domains, including the cases $\Omega^*=\Omega$ and arbitrary $\Omega^{*}$, symmetry is retained under an additional volume constraint on $\Omega$ and a boundary condition on $|Du|$. Finally, we introduce a curvature-type
overdetermined problem for the Monge-Amp\`ere equation and obtain
ellipsoidal or spherical symmetry under an additional integral
normalization condition. Our proofs rely on distinct techniques in different contexts: optimal transport and convex analysis for some cases, integral identities and isoperimetric inequalities for others, and P-function methods for the remaining cases. As a byproduct, in the $\tau=2$ case, a new maximum principle for the $P$-function $\phi(x)=\sum_{k,l=1}^{n}{\frac{\partial{S_{\tau}(D^2{u})}}{\partial{u_{kl}}}u_{k}u_{l}}-2\binom{n-1}{\tau-1}\int_{0}^{u}{f^{\frac{\tau}{n}}(t)\,dt}$
is established for arbitrary positive and nondecreasing $f$, which is of independent interest.
\end{abstract}

\maketitle

\section{Introduction}\label{sec1}
In this paper, we consider the following overdetermined problem
\begin{equation}\label{1.4}
        \left\{
\begin{array}{ll}
\det{D^{2}u}=1&\text{in}~\Omega,\\
u=0&\text{on}~\partial\Omega,\\
Du(\Omega)=\Omega^{*},
\end{array}
\right.
\end{equation}
where the source domain $\Omega$ and the prescribed target domain $\Omega^{*}$ are bounded open domains in $\mathbb{R}^{n}$ with $C^{2}$ boundary, $Du$ and $D^2u$ denote the gradient vector and Hessian matrix of $u$, respectively. The motivation for studying this class of overdetermined problems originates from optimal transportation theory, and we refer to such problems as overdetermined problems for optimal transportation. In the following, we provide the necessary background on both optimal transportation and Serrin-type overdetermined problems.

\vspace{2mm}

\subsection{Optimal transportation} The optimal mass transportation problem, originally formulated by Monge \cite{bib34}, seeks a mapping between two mass distributions that minimizes the total cost. More precisely, let $\mu_0\in\mathcal{P}(\Omega)$ and $\mu_1\in\mathcal{P}(\Omega^{*})$ be two probability measures on domains $\Omega$ and $\Omega^{*}$, respectively, and let $c:\Omega\times{\Omega^{*}}\rightarrow[0,+\infty)$ be a cost function. The problem is to find a transport map $T:\Omega\rightarrow\Omega^{*}$ that minimizes
\begin{equation}
    \inf_{T}\Big\{\int_{\Omega}{c(x,T(x))}\,d{\mu_0}:T_{\sharp}\mu_0=\mu_1\Big\},
\end{equation}
where the measure denoted by $T_{\sharp}\mu_0=\mu_1$ is defined through $(T_{\sharp}\mu_0)(A)=\mu_0(T^{-1}(A))$ for every measurable set $A\subset{\Omega^{*}}$. The measure $T_{\sharp}\mu_0$ is called image measure or push-forward of $\mu_0$ through $T$.

The Monge problem is extremely challenging because its constraint is not closed under weak convergence. It remained unsolved until the 1940s, when Kantorovich \cite{bib21} introduced a suitable relaxation that made it possible to establish the existence and analyze its solutions. Kantorovich's formulation consists of minimizing the linear functional
\begin{equation}
    \inf_{\gamma}\Big\{\int_{\Omega\times{\Omega^{*}}}{c}\,d\gamma:\gamma\in\Pi(\mu_0,\mu_1)\Big\},
\end{equation}
where $\Pi(\mu_0,\mu_1)$ is  the nonempty convex set of all probability measures on $\Omega\times{\Omega^{*}}$ with marginals $\mu_0$ on $\Omega$ and $\mu_1$ on $\Omega^{*}$. More explicitly, $\gamma\in\Pi(\mu_0,\mu_1)$ if and only if $\gamma$ is a nonnegative measure satisfying
$$\gamma(A\times{\Omega^{*}})=\mu_0(A),~~\gamma(\Omega\times{B})=\mu_1(B),$$
for all measurable subsets $A\subset\Omega$ and $B\subset\Omega^{*}$.

The Kantorovich problem is a linear minimization problem with convex constraints. It is equivalent to maximizing the dual functional
$$\int_{\Omega}\phi(x)\,d\mu_{0}+\int_{\Omega^{*}}\psi(y)\,d{\mu_1}$$
 subject to the constraints $\phi(x)+\psi(y)\leq{c(x,y)}$.~Kantorovich's pioneering work \cite{bib21} established a deep connection between optimal transport, linear programming, and duality theory \cite{bib22}.~Using Kantorovich's duality theory, Brenier \cite{bib23} showed that when $c(x,y)=|x-y|^{2}$, the optimal transport map can be expressed as $T = D u$, where $u$  is a convex potential function. Assume further that $\mu_0$ and $\mu_1$ are absolutely continuous with respect to the Lebesgue measure, with respective densities $f_0$ and $f_1$. Let $\mu_1$ concentrated on $\Omega$, the interior of the set where $u$ is finite; then $T = D u$ pushes forward $\mu_0$ to $\mu_1$, and consequently $\mu_1$ is concentrated on $\Omega^{*}$. If $u$ is of class $C^2$ and $D u$ is injective on $\Omega$, then $u$ satisfies the Monge-Amp\`{e}re equation
$$\det D^2 u(x)=\frac{f_{0}(x)}{f_{1}(D u(x))} \quad {\rm in} \ \Omega,$$
and satisfies the natural boundary condition $$Du(\Omega)=\Omega^{*}.$$ 
For further details on optimal transportation, we refer the reader to \cite{bib20,bib33}. The Monge-Amp\`{e}re equation admits a rich family of invariants, we consider the simplest Monge-Amp\`{e}re equation $\det(D^{2}u)=1$, which gives rise to a natural boundary value problem
\begin{equation}\label{1.3}
    \left\{
\begin{array}{ll}
 \det{D^{2}u}=1&\text{in}~\Omega,\\
D u(\Omega)=\Omega^{*}.
\end{array}
\right.
\end{equation}
In two-dimensional case, Pogorelov \cite{bib24} proved the existence of Aleksandrov generalized solutions for the Monge-Amp\`{e}re equation on bounded convex domains; Delano\"{e} \cite{bib25} proved global regularity under uniform convexity and smooth boundary conditions, while Urbas \cite{bib26,bib27} gave a different proof and extended the results to more general equations. In higher dimensions, Brenier \cite{bib23} established the existence of weak solutions under very mild assumptions on $\Omega$ and $\Omega^*$; Caffarelli \cite{bib17} proved $C^{1,\alpha}$ global regularity when both domains are convex; Urbas \cite{bib28} proved $C^{2,\alpha}$ global regularity if both $\Omega$ and $\Omega^{*}$ are uniformly convex. Further relevant and recent studies are available in \cite{bibCLW,bibJT2014,bib31,bib32}.

In this paper, we consider the case that the mass on the boundary is restricted to transport along the normal direction. Under the additional restriction, we have to consider the overdetermined problem \eqref{1.4} instead of the second boundary value problem \eqref{1.3}.

\vspace{2mm}

\subsection{Serrin-type overdetermined problem}

Before stating our main result of the overdetermined problem \eqref{1.4}, we recall several historical results of Serrin-type overdetermined problem in this subsection. 

In a seminal paper in 1971, Serrin \cite{bib1} proved that the following overdetermined problem has a solution if and only if $\Omega$ is a ball:
$$
\left\{
\begin{array}{ll}
 \Delta{u}=1&\text{in}~\Omega,\\
u=0&\text{on}~\partial\Omega,\\
|D{u}|=c&\text{on}~\partial\Omega,
\end{array}
\right.
$$
where $c$ is a constant.
The proof relies on the moving plane method and the maximum principle. In a subsequent paper \cite{bib2}, Weinberger gave an alternative proof by applying the maximum principle to an auxiliary function ($P$-function in the sense of L.E. Payne \cite{bib30}) $\phi(x) = |D u|^2 - \frac{2}{n}u$. More precisely, he showed that $\phi(x)$ attains its maximum $c^2$ on $\partial\Omega$, and then, by means of a Poho\v{z}aev-type identity, concluded that $\phi(x)$ is constant throughout $\Omega$. This immediately implies that $u$ is radial and that $\Omega$ is a ball. Since these foundational contributions, numerous alternative proofs and generalizations have appeared, extending the results to overdetermined problems for both linear and nonlinear operators \cite{bib3,bib4,bib5,bib6,bib8}. In many of these works, the maximum principle applied to a suitable $P$-function plays an implicit but fundamental role. 

As the generalizations of Laplace equation, let us consider the following $k$-Hessian equation
\begin{equation}\label{e1}
    \left\{
\begin{array}{ll}
 S_{k}({D^{2}u})=f(u)&\text{in}~\Omega,\\
u=0&\text{on}~\partial\Omega,
\end{array}
\right.
\end{equation}
where $S_{k}$ is $k$-th elementary symmetric function, $f(u)$ is a positive smooth function. For $1\leq\tau\leq{k}$, Enache,~Marras and Porru \cite{bib9} introduces the following two classes of $P$-function
\begin{equation}\label{P-f}
\phi(x)=\sum_{k,l=1}^{n}{\frac{\partial{S_{\tau}(D^2{u})}}{\partial{u_{kl}}}u_{k}u_{l}}-2\binom{n-1}{\tau-1}\int_{0}^{u}{f^{\frac{\tau}{n}}(t)\,dt}
\end{equation}
and $$\psi(x)=\sum_{k,l=1}^{n}{\frac{\partial{S_{\tau}(D^2{u})}}{\partial{u_{kl}}}u_{k}u_{l}}-2\binom{n-1}{\tau-1}c_{0}^{\frac{\tau}{n}}u,$$
where $c_0=f(m_0)$,~$m_0=\min_{\Omega}{u}(x)$.
When $f=1$,~$\tau=2$ or $\tau=k=n$,~a maximum principle for $\phi(x)$ (or $\psi(x)$) was established by Chen,~Ma and Shi \cite{bib10}.~When $f=f(u)$,~$\tau=2$ or $\tau=k=n$,~a maximum principle for $\psi(x)$ was established by Feng and Shi \cite{bib11}.~When $f=f(u)$,~$\tau=k=n=2$,~a maximum principle for $\phi(x)$ has been established by Enache and Porru \cite{bib12}.~When $f=f(u)$,~$\tau=k=n$,~a maximum principle for $\phi(x)$ was established by Enache,~Marras and Porru \cite{bib9}.~When $f=f(u)$,~$\tau=1$,~Enache \cite{bib13} established the maximum principle for $\phi(x)$ and Enache, Marras and Porru \cite{bib9} established a similar maximum principle for the $\psi(x)$. To the best of our knowledge, a maximum principle for this particular
$P$-function $\phi$ has not been established for the case
$f=f(u)$ and $\tau=2$ under the present setting. We establish such a maximum principle in Lemma \ref{L1}.

The equation (\ref{e1}) becomes the standard Monge-Amp\`ere equation when $k=n$ and $f=1$. If the solution $u$ satisfies the boundary condition $H_{n-1}|Du|^{n+1}=c$ on $\partial\Omega$,~by applying the maximum principle to the $P$-function $\phi(x)$ ($\tau=n,f=1$) together with an entropy estimate from affine curvature flow, Brandolini et al. showed in \cite{bib14} that the overdetermined problem admits a solution if and only if $\Omega$ is an ellipsoid, where $H_{n-1}$ denotes the Gauss curvature of the boundary. However, many of the techniques developed do not rely on a direct application of the $P$-function.~If $u$ satisfies the boundary condition $H_{n-1}|Du|^{n-1}=1$ on $\partial\Omega$, via a straightforward application of Newton's inequality (Maclaurin inequalities), Enache and Porru \cite{bib13} showed that $\Omega$ is a ball. More generally,~for a fixed integer $1\leq{r}<n$,~if $u$ satisfies additional boundary condition $\frac{H_{n-1}}{H_{r-1}}|Du|^{r-1}=\binom{n-1}{r-1}^{-1}$ on $\partial\Omega$, Enache, Marras and Porru  \cite{bib9} proved that $\Omega$ is a ball by using the Newton's inequality, where $H_{r-1}$ denotes the $(r-1)$-th curvature of the boundary. Note that one can refer to Section \ref{Sec2.2} for the detailed definitions of the $k$-th curvature of the boundary. If the solution $u$ satisfies $|Du|=1$ on $\partial\Omega$, Brandolini et al. \cite{bib7} provided an alternative proof that avoids the $P$-function method, and  employ Newton's inequalities and the Poho\v{z}aev-type identity. They showed that a solution exists if and only if $\Omega$ is a ball. 

In this paper,we investigate symmetry for two distinct classes of
overdetermined problems, arising respectively from optimal transportation and nonstandard curvature boundary conditions. Our approach combines optimal transport, convex geometry,
quermassintegral inequalities, and the $P$-function method. First, for overdetermined problems associated with the transport conditions $Du(\Omega)=B$, $Du(\Omega)=\Omega$ and $Du(\Omega)=\Omega^{*}$, we
establish symmetry results for the Monge-Amp\`ere equation and their extensions to the $k$-Hessian equations. Second, for the nonstandard boundary condition $H_1|Du|^3=c$, we introduce an additional integral
normalization condition and obtain symmetry results for
$\det D^2u=1$. Finally, we extend the problem to
$\det D^2u=f(u)$ with $f$ positive and nondecreasing. The main new ingredient is a maximum principle for the associated $P$-function \eqref{P-f} in
the case $f=f(u)$ and $\tau=2$, which, together with the integral
normalization condition, yields the symmetry of both the domain and the solution.

\vspace{2mm}

\subsection{Main results}
We state our main theorems in this subsection. We consider the optimal transportation problem between the Lebesgue
measures on $\Omega$ and $\Omega^*$ with quadratic cost
$c(x,y)=|x-y|^2$. In this paper, we restrict our attention to the case
where the source and target densities are identically equal to $1$ on
their respective domains. By Brenier's
theorem, the optimal transport map is given by $T=Du$ for a convex potential $u$, and we assume that
$$
Du(\Omega)=\Omega^*,
\quad
u=0\quad\text{on }\partial\Omega.
$$
Accordingly, the Monge-Amp\`ere equation takes the form
$$
\det D^2u=1\quad\text{in }\Omega.
$$

The Dirichlet condition $u=0$ enforces the vanishing of the tangential component of $Du$, thereby restricting boundary transport to the normal direction. This strengthened boundary condition is standard in overdetermined problems for Monge-Amp\`{e}re equations, where it serves as a key tool for deriving symmetry results of the domain. Moreover, this normal constraint also has practical relevance in heat flow with absorbing boundaries \cite{bib41} and numerical discretizations of optimal transportation \cite{bib42}. 

We now state our main results, which concern the overdetermined problem for the Monge-Amp\`{e}re equation with Dirichlet and natural boundary conditions. We establish these results in three different situations regarding to the target domain $\Omega^*$, namely, when $\Omega^*$ is the unit ball $B$, when $\Omega^*$ coincides with $\Omega$, and when $\Omega^*$ is an arbitrary domain.

\begin{theorem}\label{T1.1}
Let $\Omega$ be a $C^2$ bounded convex domain and $u\in C^2(\overline{\Omega})$ a convex solution of
\begin{equation}
        \left\{
\begin{array}{ll}
\det{{D^{2}u}}=1&\text{in}~\Omega,\\
u=0&\text{on}~\partial{\Omega},\\
Du(\Omega)=B,
\end{array}
\right.
\end{equation}
where $B$ is the unit ball centered at the origin. Then, for some $x_0\in\mathbb{R}^n$, $u(x)=\dfrac{|x-x_0|^2-1}{2}$ and $\Omega$ is the unit ball centered at $x_0$.
\end{theorem}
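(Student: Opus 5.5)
Since $u$ is convex and $C^2$ up to the boundary with $\det D^2u=1$, $D^2u$ is positive definite, so $Du$ is a diffeomorphism of $\Omega$ onto its image. The hypothesis $Du(\Omega)=B$ then gives the volume identity
\[
|\Omega|=\int_\Omega \det D^2u\,dx=|Du(\Omega)|=|B|.
\]
Because $u\in C^1(\overline\Omega)$, continuity yields $Du(\overline\Omega)=\overline B$, and $Du(\partial\Omega)=\partial B$ since $Du$ is a homeomorphism of the interior onto $B$ (invariance of domain). The Dirichlet condition $u=0$ on $\partial\Omega$, together with $u<0$ in $\Omega$, forces the tangential gradient to vanish, so $Du=|Du|\nu$ with $\nu$ the outer unit normal. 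Combined with $|Du|=1$ on $\partial\Omega$, this gives $Du=\nu$ on $\partial\Omega$. The problem has therefore become a Serrin-type problem for the Monge--Amp\`ere equation with $|Du|=1$ on the boundary.

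At this point one could cite Brandolini et al.\ \cite{bib7}, who show that $\det D^2u=1$, $u=0$, $|Du|=1$ on $\partial\Omega$ forces $\Omega$ to be a ball. I would instead give a direct convex-analysis argument.

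\textbf{Step 1: support function.} Consider the Legendre transform $u^*$ on $B$. Then $u^*(y)=x\cdot y-u(x)$ with $y=Du(x)$. For $x\in\partial\Omega$ we have $y=\nu(x)$, so
\[
u^*(\nu)=x\cdot\nu=h_\Omega(\nu),
\]
the support function of $\Omega$. Thus $u^*|_{\partial B}=h_\Omega$, and $u^*$ solves $\det D^2u^*=1$ in $B$ with $Du^*(B)=\Omega$.

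\textbf{Step 2: comparison with the model solution.} The model is $w(y)=\tfrac12(|y|^2-1)+\ell(y)$, where $\ell$ is affine. I would compare $u^*$ against such $w$ using the equality case of an isoperimetric-type inequality. The mixed-volume (Minkowski) inequality gives
\[
\int_{\mathbb S^{n-1}} h_\Omega\,dS_{n-1}(B,\dots,B)\ \ge\ n|B|^{(n-1)/n}|\Omega|^{1/n}=n|B|,
\]
i.e.\ $\int_{\partial B}u^*\ge n|B|$, with equality iff $\Omega$ is a unit ball. For the reverse inequality, I would use an integral identity: compute $\int_B \operatorname{div}(\cdots)$ via a Poho\v{z}aev-type identity for $u^*$, or simply use
\[
\int_\Omega x\cdot Du\,dx=-n\int_\Omega u\,dx,
\]
together with the arithmetic--geometric mean inequality $\Delta u\ge n(\det D^2u)^{1/n}=n$.

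A cleaner route to the reverse inequality is the following. Integrate $\Delta u\ge n$ over $\Omega$ to get
\[
|\partial\Omega|=\int_{\partial\Omega} u_\nu\ge n|\Omega|=n|B|=|\partial B|.
\]
Against this, the isoperimetric inequality with $|\Omega|=|B|$ gives $|\partial\Omega|\ge|\partial B|$, which points the same way and does not close the argument. So I would use the other side, $\int_\Omega \sigma_{n-1}$, or use $Du=\nu$ directly: the Gauss map $\nu$ of $\partial\Omega$ coincides with $Du|_{\partial\Omega}$. Its tangential derivative is $D^2u$ restricted to $T\partial\Omega$, which must equal the second fundamental form, while $D^2u\,\nu\cdot\nu=u_{\nu\nu}$. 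A volume count via the area formula then gives
\[
\int_{\partial\Omega}H_{n-1}=|\partial B|,
\]
which holds automatically. Hence the curvature route needs a further ingredient, namely a reverse inequality obtained from $\Delta u\ge n$ integrated against $u$:
\[
\int_\Omega |Du|^2 = -\int_\Omega u\Delta u,
\]
combined with the Poho\v{z}aev identity for $\det D^2u=1$.

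\textbf{Main obstacle.} I expect the hardest step to be closing the equality case. My first attempt would be Weinberger's $P$-function $\phi=u^{ij}u_iu_j-2u$ (the case $\tau=n$, $f=1$ of \eqref{P-f}), for which the maximum principle of Chen--Ma--Shi \cite{bib10} gives $\phi\le\max_{\partial\Omega}\phi$. On the boundary, $u^{ij}\nu_i\nu_j|Du|^2=\tfrac{H_{n-1}}{1}|Du|^2$ because $\det D^2u=1$. This links the problem to the Brandolini-type condition, which I would then use to force $\phi$ to be constant and hence $D^2u=I$, so that $\Omega$ is a unit ball.
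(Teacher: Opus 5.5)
Your reduction is correct: $|\Omega|=|B|$, $Du=\nu$ on $\partial\Omega$, and $u^*|_{\partial B}=h_\Omega$. Your Minkowski--Urysohn inequality $\int_{\partial B}u^*=\int_{\mathbb{S}^{n-1}}h_\Omega\ge n|B|$, with equality only for a unit ball, is exactly the paper's quermassintegral inequality $W_{n-1}(\Omega)\ge\omega_n$, since $\int_{\mathbb{S}^{n-1}}h_\Omega=nW_{n-1}(\Omega)$. Citing \cite{bib7} once $|Du|=1$ on $\partial\Omega$ is known would also be a legitimate shortcut.

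The gap is that the direct argument you chose never produces the matching upper bound $\int_{\partial B}u^*\le n|B|$, and that bound is the heart of the proof. None of your candidates delivers it:
\begin{itemize}
\item $|\partial\Omega|\ge|\partial B|$ points the wrong way, as you note yourself.
\item $\int_{\partial\Omega}H_{n-1}=|\partial B|$ holds for every convex body, so it carries no information.
\item ``Poho\v{z}aev plus $\Delta u\ge n$'' only yields $\int_\Omega|Du|^2\ge n\int_\Omega(-u)=\frac{n}{n+2}W_{n-1}(\Omega)$. This is a \emph{lower} bound on $\int_\Omega|Du|^2$ and says nothing about $W_{n-1}$ by itself.
\end{itemize}
The $P$-function fallback also fails. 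On $\partial\Omega$ one has $\phi=u^{ij}u_iu_j=H_{n-1}|Du|^{n+1}=H_{n-1}$; in fact $H_{n-1}=1/u_{\nu\nu}$ there. This is not known to be constant, so neither the maximum principle of \cite{bib10} nor the result of \cite{bib14} gives any rigidity. You supply no other mechanism that would force $\phi$ to be constant.

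The missing idea is to use the transport condition in the interior, not only through $|\Omega|=|B|$ and $|Du|=1$ on $\partial\Omega$. The change of variables $\xi=Du(x)$ with $\det D^2u=1$ gives
\[
\int_\Omega|Du|^2\,dx=\int_B|\xi|^2\,d\xi=\frac{n}{n+2}\,\omega_n .
\]
Combined with the lower bound above, this gives $W_{n-1}(\Omega)\le\omega_n$.

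In your Legendre language, the identities $\int_Bu^*=-(n+1)\int_\Omega u$ and $\int_\Omega u=\int_{\partial B}u^*-(n+1)\int_Bu^*$ give
\[
\int_{\partial B}u^*=n(n+2)\int_\Omega(-u)\le(n+2)\int_\Omega(-u)\Delta u=(n+2)\int_\Omega|Du|^2=n|B| .
\]
Together with your lower bound, every inequality becomes an equality. Hence $\Omega$ is a unit ball and $\int_\Omega(-u)(\Delta u-n)=0$. So $\Delta u\equiv n=n(\det D^2u)^{1/n}$, which forces $D^2u=I$ and $u=\frac{|x-x_0|^2-1}{2}$. This is exactly how the paper closes both of its proofs.
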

\begin{theorem}\label{GTH1.2} Let $\Omega$ be a $C^2$ bounded convex domain, $|\Omega|\leq\omega_{n}$ and $u\in C^2(\overline{\Omega})$ be a convex solution of
\begin{equation}
        \left\{
\begin{array}{ll}
 \det{D^{2}u}=1&\text{in}~\Omega,\\
 u=0&\text{on}~\partial{\Omega},\\
|Du|\geq{1}&\text{on}~\partial{\Omega},\\
Du(\Omega)=\Omega,
\end{array}
\right.
\end{equation}
where $\omega_{n}$ is the volume of the $n$-dimensional unit ball. Then $u(x)=\dfrac{|x|^2-1}{2}$ and $\Omega$ is the unit ball centered at the origin. 
\end{theorem}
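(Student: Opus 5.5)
The plan is to derive $|Du|\equiv 1$ on $\partial\Omega$ from an integral identity combined with the volume constraint, and then reduce to Theorem \ref{T1.1}.

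\emph{Boundary geometry.} Since $u\in C^2(\overline{\Omega})$ is convex and $\det D^2u=1$ up to the boundary, $D^2u$ is positive definite on $\overline{\Omega}$. Because $u=0$ on $\partial\Omega$ and $u<0$ in $\Omega$, we have $Du=|Du|\nu$ on $\partial\Omega$, where $\nu$ is the outer unit normal. At a boundary point, write $D^2u$ in a frame consisting of a principal tangent frame together with $\nu$. Its tangential block is $|Du|\,\mathrm{diag}(\kappa_1,\dots,\kappa_{n-1})$. Positive definiteness then forces $|Du|>0$ and $\kappa_i>0$, so the Gauss curvature $H_{n-1}>0$ everywhere on $\partial\Omega$. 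A cofactor expansion in the same frame gives
\[
\sum_{i,j} U^{ij}u_iu_j=H_{n-1}|Du|^{n+1}\quad\text{on }\partial\Omega ,
\]
where $U^{ij}$ is the cofactor matrix of $D^2u$.

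\emph{Integral identity.} Use the divergence-free property of the cofactor matrix, $\partial_i U^{ij}=0$. This gives $n\det D^2u=\partial_i(U^{ij}u_j)$. Integrate over $\Omega$ and use $u_j=|Du|\nu_j$ on the boundary:
\[
n|\Omega|=\int_{\partial\Omega}U^{ij}u_j\nu_i\,d\sigma=\int_{\partial\Omega}H_{n-1}|Du|^{n}\,d\sigma .
\]
By the Gauss--Bonnet formula for the strictly convex hypersurface $\partial\Omega$, $\int_{\partial\Omega}H_{n-1}\,d\sigma=|\mathbb{S}^{n-1}|=n\omega_n$. Combined with $|Du|\ge 1$ this yields
\[
n|\Omega|\ge n\omega_n .
\]
The hypothesis $|\Omega|\le\omega_n$ then forces equality, so
\[
\int_{\partial\Omega}H_{n-1}\big(|Du|^n-1\big)\,d\sigma=0 .
\]
Since $H_{n-1}>0$ everywhere, we conclude $|Du|\equiv1$ on $\partial\Omega$.

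\emph{Identifying the target and applying Theorem \ref{T1.1}.} With $|Du|\equiv1$ we have $Du=\nu$ on $\partial\Omega$. The map $Du$ is a $C^1$ diffeomorphism of $\overline{\Omega}$ onto $\overline{Du(\Omega)}$, since it is injective by strict convexity and has nondegenerate Jacobian. Hence $\partial Du(\Omega)=Du(\partial\Omega)=\nu(\partial\Omega)=\mathbb{S}^{n-1}$, the last equality because the Gauss map of a strictly convex closed hypersurface is onto the sphere. Therefore $Du(\Omega)=B$, and at the same time $Du(\Omega)=\Omega$ by hypothesis, so $\Omega=B$. Theorem \ref{T1.1} then gives $u=\frac{|x-x_0|^2-1}{2}$ with $\Omega=B(x_0,1)$, and comparing with $\Omega=B$ forces $x_0=0$.

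The main obstacle is the first step: justifying the boundary cofactor identity and, above all, the strict positivity $H_{n-1}>0$ at every boundary point. Without it, the equality case only yields $|Du|=1$ where the Gauss curvature is positive, and the surjectivity of $\nu$ onto the sphere would also be in doubt. I expect this to follow from the non-degeneracy of $D^2u$ on $\overline{\Omega}$, which is available because $u\in C^2(\overline\Omega)$ and $\det D^2u=1$ holds up to the boundary.
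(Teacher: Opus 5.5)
Your proof is correct, but it follows a different route from the paper's. The paper's proof of this theorem involves no boundary curvature. It uses the transport condition directly: $Du$ maps $\partial\Omega$ onto $\partial\Omega$, so $|Du|\ge 1$ says every point of $\partial\Omega$ has norm at least $1$. Since $0=Du(\bar x)\in\Omega$ at the minimum point $\bar x$, this gives $B\subset\Omega$, and $|\Omega|\le\omega_n$ squeezes $\Omega=B$, after which Theorem~\ref{T1.1} applies.

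You instead get $|Du|\equiv 1$ on $\partial\Omega$ from $n|\Omega|=\int_{\partial\Omega}H_{n-1}|Du|^n$ and $\int_{\partial\Omega}H_{n-1}=n\omega_n$. This is precisely the $k=n$ case of \eqref{F18}--\eqref{F19} in the proof of Theorem~\ref{T5.1}, where the quermassintegral inequality \eqref{F20} degenerates into the identity $W_n(\Omega)=\omega_n$. That identity carries no rigidity, so you replace the ``equality forces a ball'' step by the Gauss-map argument showing $Du(\Omega)=B$, and let Theorem~\ref{T1.1} supply the rigidity.

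The paper's argument is shorter and only needs $Du$ to be a homeomorphism of $\overline\Omega$ onto $\overline{Du(\Omega)}$. Yours costs nondegeneracy of $D^2u$ up to the boundary, the boundary formula \eqref{F7}, and the Gauss--Bonnet and Gauss-map surjectivity facts. You justify all of these correctly, including $H_{n-1}>0$ from positivity of the tangential block $|Du|\,\mathrm{diag}(\kappa_i)$.

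In exchange, your key step never uses $Du(\Omega)=\Omega$. It shows that $\det D^2u=1$, $u=0$ and $|Du|\ge 1$ on $\partial\Omega$, together with $|\Omega|\le\omega_n$, already force $Du(\Omega)=B$. The transport condition then only fixes the center $x_0=0$, and Theorem~\ref{GTH1.3} follows from the same argument. This is the Monge--Amp\`ere counterpart of the remark after Theorem~\ref{T5.1}.
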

\begin{theorem}\label{GTH1.3} Let $\Omega$ be a $C^2$ bounded convex domain, $|\Omega|\leq\omega_{n}$ and $u\in C^2(\overline{\Omega})$ be a convex solution of
\begin{equation}
        \left\{
\begin{array}{ll}
 \det{D^{2}u}=1&\text{in}~\Omega,\\
 u=0&\text{on}~\partial{\Omega},\\
|Du|\geq{1}&\text{on}~\partial{\Omega},\\
Du(\Omega)=\Omega^{*},
\end{array}
\right.
\end{equation}
where $\omega_{n}$ is the volume of the $n$-dimensional unit ball. Then, for some $x_0\in\mathbb{R}^n$, $u(x)=\dfrac{|x-x_0|^2-1}{2}$ and $\Omega$ is the unit ball centered at $x_0$, $\Omega^*$ is the unit ball centered at the origin. 
\end{theorem}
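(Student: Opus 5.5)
Theorem \ref{GTH1.3} follows from Theorem \ref{T1.1} once we show $\Omega^*=B$; the boundary inequality $|Du|\ge 1$ together with $|\Omega|\le\omega_n$ should force this.

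\textbf{Step 1: volumes.} Since $u\in C^2(\overline\Omega)$ is convex with $\det D^2u=1>0$, the Hessian is positive definite, so $Du$ is injective on $\Omega$ by strict convexity. The area formula then gives
\[
|\Omega^*|=|Du(\Omega)|=\int_\Omega \det D^2u\,dx=|\Omega|\le\omega_n .
\]

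\textbf{Step 2: $B\subset\Omega^*$.} Because $u=0$ on $\partial\Omega$ and $u$ is convex, $u<0$ in $\Omega$. At each $x\in\partial\Omega$ the gradient is $Du(x)=|Du(x)|\,\nu(x)$, where $\nu$ is the outer unit normal, and $|Du(x)|\ge1$. Fix $p$ with $|p|<1$ and consider $w(x)=u(x)-p\cdot x$.

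\begin{itemize}
\item $w$ is continuous on the compact set $\overline\Omega$, so it attains its minimum there.
\item At any boundary point $x$ the outward derivative is $\partial_\nu w=|Du(x)|-p\cdot\nu\ge 1-|p|>0$. So $w$ strictly decreases when moving inward from $\partial\Omega$, and its minimum cannot lie on $\partial\Omega$.
\item The minimum is therefore attained at an interior point $x_p$, where $Du(x_p)=p$. Hence $p\in\Omega^*$, so $B\subset\Omega^*$.
\end{itemize}

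This is exactly the step where $|Du|\ge1$ enters.

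\textbf{Step 3: $\Omega^*=B$.} From Steps 1 and 2, $\omega_n=|B|\le|\Omega^*|\le\omega_n$, so $|\Omega^*\setminus B|=0$. Since $\Omega^*=Du(\Omega)$ is open, an open set of measure zero is empty, so $\Omega^*\setminus\overline B=\emptyset$. It remains to rule out points of $\Omega^*$ on the sphere $\partial B$. If some $q\in\Omega^*$ had $|q|=1$, then because $\Omega^*$ is open it would contain a small ball around $q$, and that ball meets the exterior of $\overline B$. This contradicts the previous sentence, so $\Omega^*\subset B$. Together with Step 2, $\Omega^*=B$, and equality throughout also gives $|\Omega|=\omega_n$.

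\textbf{Step 4: conclusion.} Theorem \ref{T1.1} now applies directly: $u(x)=\frac{|x-x_0|^2-1}{2}$ and $\Omega$ is the unit ball centered at $x_0$, while $\Omega^*=B$ is the unit ball centered at the origin.

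The only genuinely delicate point is the rigidity in Theorem \ref{T1.1}, which we are assuming. Within this argument, the care goes into Step 2: the strict inequality $\partial_\nu w>0$ uses $|p|<1$ strictly, which is why we work with the open ball $B$ and treat the sphere $\partial B$ separately in Step 3. Theorem \ref{GTH1.2} is the special case $\Omega^*=\Omega$: the same argument gives $\Omega=B$, and the centre is then forced to be $x_0=0$.
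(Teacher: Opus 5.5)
Your proof is correct. It has the same overall structure as the paper's: the volume identity $|\Omega^*|=|\Omega|\le\omega_n$, then the inclusion $B\subset\Omega^*$, hence $\Omega^*=B$, then Theorem~\ref{T1.1}. The difference is in how you prove the key inclusion.

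\textbf{The paper's route.} It first uses the boundary correspondence $Du(\partial\Omega)=\partial\Omega^*$, which it obtains from strict convexity without further detail. It then reads $|Du|\ge1$ as $|y|\ge1$ for every $y\in\partial\Omega^*$, and gets $0\in\Omega^*$ from the interior minimum of $u$. Finally it argues that a segment from $0$ to a point of $B\setminus\Omega^*$ would have to cross $\partial\Omega^*$ inside $B$.

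\textbf{Your route.} You minimize $u-p\cdot x$ for each $|p|<1$. You then use $\partial_\nu u=|Du|\ge1$ on $\partial\Omega$ to rule out boundary minima.

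\textbf{Comparison.} Your argument is more self-contained.
\begin{itemize}
\item It needs only $u\in C^1(\overline\Omega)$. Convexity enters just to fix the sign $Du=|Du|\,\nu$ on $\partial\Omega$.
\item It never requires $Du$ to be a homeomorphism of $\overline\Omega$ onto $\overline{\Omega^*}$ carrying $\partial\Omega$ onto $\partial\Omega^*$.
\item It covers $p=0$ without a separate argument.
\item In fact it shows that $\partial_\nu u\ge1$ on $\partial\Omega$ alone forces $B\subset Du(\Omega)$.
\end{itemize}
In exchange, the paper's version yields slightly more geometric information, namely that all of $\partial\Omega^*$ lies outside $B$, though this is not needed.

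Your Step~3 also spells out the openness argument that turns $|\Omega^*\setminus B|=0$ into $\Omega^*=B$. The paper leaves this step implicit when it passes from $|B|\le|\Omega^*|\le|B|$ directly to $\Omega^*=B$.
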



Inspired by \cite{bib14}, we first consider the Monge-Amp\`{e}re equation
with the nonstandard boundary condition $H_1|Du|^3=c$. By imposing an additional integral normalization
condition, we obtain a symmetry result for the corresponding
overdetermined problem.

\begin{theorem}\label{TH2}
    Let $\Omega\subset\mathbb{R}^{n}$ be a $C^{2}$ bounded convex domain.~Assume that $u\in{C^{2}(\overline{\Omega})}$ is a convex solution to the following equation:
    \begin{equation}
        \left\{
\begin{array}{ll}
 \det{D^{2}u}=1&\text{in}~\Omega,\\
u=0&\text{on}~\partial\Omega,\\
H_{1}|Du|^{3}=c&\text{on}~\partial\Omega,
\end{array}
\right.
    \end{equation}
    where $c>0$ is a constant and $H_{1}$ denotes $(n-1)$ times the mean curvature of $\partial\Omega$. Assume, in addition, that
\begin{equation}\label{1.10}
\frac1{|\Omega|}
\int_\Omega(-u)\,dx
\geq
\frac{c}{(n-1)(n+2)}.
\end{equation}
Then domain $\Omega$ is an ellipse for $n=2$ or a ball for $n\geq{3}$ and $u$ is elliptically symmetric for $n=2$ or spherically symmetric for $n\ge 3$.
\end{theorem}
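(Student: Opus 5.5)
The plan is to run the $P$-function argument with $\tau=2$, $f\equiv 1$, and to compare the resulting integral inequality with the normalization \eqref{1.10}. Set
$$\phi(x)=\sum_{i,j=1}^n S_2^{ij}(D^2u)\,u_iu_j-2(n-1)u,\qquad S_2^{ij}=\frac{\partial S_2(D^2u)}{\partial u_{ij}}.$$
For $\det D^2u=1$ this is the $P$-function \eqref{P-f} with $\tau=2$, $f\equiv1$, since $\binom{n-1}{1}=n-1$. The steps are as follows.

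\textbf{Step 1 (maximum principle).} I would invoke the maximum principle for $\phi$ in the case $\tau=2$, $f=1$, $k=n$, due to Chen, Ma and Shi \cite{bib10}; alternatively Lemma \ref{L1} with $f\equiv1$ gives the same conclusion. It says that $\phi$ attains its maximum on $\partial\Omega$.

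\textbf{Step 2 (boundary value of $\phi$).} Since $u=0$ on $\partial\Omega$ and $u$ is convex, we have $Du=|Du|\nu$ there, with $\nu$ the outer normal. Using the standard decomposition of $D^2u$ in a principal frame at a boundary point, one gets
$$S_2^{ij}u_iu_j=|Du|^2\,S_1(\text{tangential block of }D^2u)=|Du|^2\cdot H_1|Du|=H_1|Du|^3=c .$$
Hence $\phi\le c$ in $\Omega$.

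\textbf{Step 3 (integral identity).} The tensor $S_2^{ij}$ is divergence free, and $S_2^{ij}u_{ij}=2S_2$. Therefore
$$S_2^{ij}u_iu_j=\partial_j\bigl(S_2^{ij}u_iu\bigr)-2uS_2 .$$
The boundary term vanishes because $u=0$ on $\partial\Omega$, so $\int_\Omega S_2^{ij}u_iu_j=2\int_\Omega(-u)S_2$.

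\textbf{Step 4 (Maclaurin).} Maclaurin's inequality gives $S_2\ge\binom n2(\det D^2u)^{2/n}=\frac{n(n-1)}2$. Integrating $\phi\le c$ and using Step 3 then yields
$$n(n-1)\int_\Omega(-u)\le \int_\Omega S_2^{ij}u_iu_j\le c|\Omega|-2(n-1)\int_\Omega(-u).$$
Rearranging, $(n-1)(n+2)\int_\Omega(-u)\le c|\Omega|$. This is the reverse of \eqref{1.10}, so every inequality in the chain is an equality. In particular $\phi\equiv c$ in $\Omega$, and, for $n\ge3$, equality holds in Maclaurin's inequality at every point.

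\textbf{Step 5 (rigidity, $n\ge 3$).} Equality in Maclaurin's inequality forces all eigenvalues of $D^2u$ to coincide. Since $\det D^2u=1$, this gives $D^2u=I$. Hence $u=\frac{|x-x_0|^2}{2}+\text{const}$, the level set $\{u=0\}=\partial\Omega$ is a sphere, and $u$ is radial about $x_0$.

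\textbf{Step 6 (rigidity, $n=2$).} Here $S_2=\det D^2u$, so Maclaurin's inequality is an identity and Step 4 only gives $\phi\equiv c$. The expected obstacle lies here: one must show that constancy of $\phi=u_{22}u_1^2-2u_{12}u_1u_2+u_{11}u_2^2-2u$ forces $D^3u\equiv0$. I would rely on the equality case inside the proof of the maximum principle, as in \cite{bib10,bib14}. That proof shows $\Delta_L\phi$ (or a modified elliptic operator applied to $\phi$) is bounded below by a nonnegative quadratic form in the third derivatives, so $\phi\equiv$ const makes this form vanish. Concretely, differentiate $\phi_k=0$ and use $\det D^2u=1$ together with its derivatives $u^{ij}u_{ijk}=0$. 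This should express $D^3u$ linearly through $\phi_k$ and yield $D^3u=0$ on the open set where $Du\ne0$, hence everywhere by continuity. Then $D^2u$ is a constant matrix $A$ with $\det A=1$. So $u$ is a quadratic polynomial, $\Omega=\{u<0\}$ is an ellipse, and $u$ is elliptically symmetric.
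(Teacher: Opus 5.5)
Your proof is correct. Steps 1--4 follow the paper: the same $P$-function $\phi=H_1|Du|^3-2(n-1)u$, the Chen--Ma--Shi maximum principle from \cite{bib10} to get $\phi\le c$, the identity $\int_\Omega S_2^{ij}u_iu_j=2\int_\Omega(-u)S_2(D^2u)$, and Maclaurin's inequality $S_2(D^2u)\ge\binom n2$.

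The difference is in how you extract rigidity. The paper uses the strong-maximum-principle dichotomy ($\phi<c$ in $\Omega$ or $\phi\equiv c$) and excludes the first case by integrating. It then applies the equality statement of Lemma~\ref{LA.1} as a black box for every $n$, admitting that the symmetry of $u$ is only implicit in the proof in \cite{bib10}. You instead squeeze the integral chain against \eqref{1.10}. This gives $\phi\equiv c$ and also, because $-u>0$ in $\Omega$, the pointwise identity $S_2(D^2u)\equiv\binom n2$. For $n\ge3$ that is equality in Maclaurin's inequality, so $D^2u=I$ at once. The maximum principle is then needed only for the bound $\phi\le c$, and the radial symmetry of $u$ comes for free.

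For $n=2$, your ``concrete'' sketch closes, and more cheaply than the second-order equality analysis you allude to:
\begin{itemize}
\item Since $\mathrm{cof}(D^2u)\,D^2u=(\det D^2u)I=I$, the lower-order terms in $\phi_k$ cancel, leaving $\phi_k=\partial_k\bigl[(\mathrm{cof}\,D^2u)_{ij}\bigr]u_iu_j$.
\item At a point with $Du\ne0$, take an orthonormal frame with $u_1=0\ne u_2$. Then $\phi_1=\phi_2=0$ give $u_{111}=u_{112}=0$.
\item The differentiated equation $u_{22}u_{11k}-2u_{12}u_{12k}+u_{11}u_{22k}=0$ ($k=1,2$) then gives $u_{122}=u_{222}=0$, since $u_{11}>0$.
\item $u$ has a single critical point, so $D^3u\equiv0$ by continuity.
\end{itemize}

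So your version is more self-contained and makes the symmetry of $u$ explicit, while the paper's is shorter because it outsources the entire rigidity step.

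One small point: Lemma~\ref{L1} as stated carries the eigenvalue-ratio hypothesis. Invoking it ``with $f\equiv1$'' therefore requires noting, from its proof, that this hypothesis only enters through terms multiplied by $f'$, which vanish. Citing \cite{bib10}, as you do first, avoids the issue.
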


We now introduce a more general
overdetermined problem for the equation $\det D^2u=f(u)$, where $f$ is a positive nondecreasing function.
\begin{theorem}\label{TH2-new}
    Let $\Omega\subset\mathbb{R}^n$ be a bounded convex domain with
$C^2$ boundary. Assume that $f\in C^2(\mathbb{R})$ is positive and
nondecreasing. Let
$u\in C^4(\Omega)\cap C^2(\overline{\Omega})$ be a convex solution of
\begin{equation}
\left\{
\begin{array}{ll}
\det D^2u=f(u) & \text{in }\Omega,\\
u=0 & \text{on }\partial\Omega,\\
H_1|Du|^3=c & \text{on }\partial\Omega,
\end{array}
\right.
\end{equation}
and assume that $\frac{\lambda_{\max}(x)}{\lambda_{\min}(x)}
\leq
\left(\frac{3n}{4}\right)^{\frac{n}{2(n-1)}}$ for all $x\in\Omega$, where $\lambda_{\min}(x)$ and $\lambda_{\max}(x)$ denote the smallest
and largest eigenvalues of $D^2u(x)$, respectively, $c>0$ is a
constant, and $H_1$ denotes the first curvature of $\partial\Omega$,
that is, $(n-1)$ times its mean curvature. Assume, in addition, that
\begin{equation}\label{1.10-new}
\frac{n-1}{|\Omega|}
\int_\Omega
\left[
-nu f(u)^{\frac2n}
-2\int_0^u f(t)^{\frac2n}\,dt
\right]dx
\geq c.
\end{equation}
Then $\Omega$ is an ellipse when $n=2$ and a ball when $n\geq3$.
Moreover, $u$ is elliptically symmetric when $n=2$ and spherically
symmetric when $n\geq3$.
\end{theorem}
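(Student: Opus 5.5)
The plan is to run the $P$-function argument with $\tau=2$, using the maximum principle of Lemma~\ref{L1} for
$$\phi(x)=\sum_{i,j=1}^n S_2^{ij}(D^2u)\,u_iu_j-2(n-1)\int_0^u f^{\frac2n}(t)\,dt,\qquad S_2^{ij}:=\frac{\partial S_2(D^2u)}{\partial u_{ij}}.$$
I would then combine it with an integral identity and the Maclaurin inequality, so that the normalization \eqref{1.10-new} forces equality everywhere. The eigenvalue-ratio hypothesis $\lambda_{\max}/\lambda_{\min}\le (3n/4)^{\frac{n}{2(n-1)}}$ together with $f'\ge0$ is exactly what Lemma~\ref{L1} needs. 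I take from it that $\phi$ attains its maximum on $\partial\Omega$, and that in the borderline case the differential inequality $L\phi\ge0$ (for a suitable elliptic operator $L$) becomes an identity.

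\emph{Boundary value.} On $\partial\Omega$ we have $u=0$, and since $u$ is convex and vanishes on the boundary, $Du=-|Du|\nu$ with $\nu$ the outer normal. Choose coordinates at a boundary point with $e_n=\nu$. Then $u_{\alpha\beta}=|Du|\kappa_{\alpha\beta}$ for $\alpha,\beta<n$, where $\kappa_{\alpha\beta}$ is the second fundamental form. This gives
$$S_2^{ij}u_iu_j=|Du|^2S_2^{nn}=|Du|^2\sum_{\alpha<n}u_{\alpha\alpha}=H_1|Du|^3=c,$$
and the integral term vanishes because $u=0$. Hence $\phi\equiv c$ on $\partial\Omega$, and Lemma~\ref{L1} gives $\phi\le c$ in $\Omega$.

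\emph{Reverse inequality.} The tensor $S_2^{ij}$ is divergence free and $S_2^{ij}u_{ij}=2S_2(D^2u)$. Integrating by parts, with the boundary term killed by $u=0$, gives
$$\int_\Omega S_2^{ij}u_iu_j\,dx=-2\int_\Omega u\,S_2(D^2u)\,dx.$$
By Maclaurin, $S_2(D^2u)\ge\binom n2(\det D^2u)^{2/n}=\frac{n(n-1)}2 f(u)^{2/n}$, and $-u\ge0$, so
$$\int_\Omega\phi\,dx\ \ge\ (n-1)\int_\Omega\Big[-nuf(u)^{\frac2n}-2\int_0^uf^{\frac2n}\Big]dx\ \ge\ c|\Omega|$$
by \eqref{1.10-new}. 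Combined with $\phi\le c$, this forces $\phi\equiv c$ in $\Omega$, and equality holds in Maclaurin wherever $u<0$, i.e.\ throughout $\Omega$.

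\emph{Case $n\ge3$.} Equality in Maclaurin means all eigenvalues of $D^2u$ coincide, so $u_{ij}=g(x)\delta_{ij}$ with $g=f(u)^{1/n}$. The compatibility conditions $\partial_k u_{ij}=\partial_j u_{ik}$ give $g_k\delta_{ij}=g_j\delta_{ik}$. Taking $i=j\ne k$ yields $g_k=0$, so $g$ is constant. Then $u=\frac g2|x-x_0|^2+\text{const}$, the level set $\{u=0\}=\partial\Omega$ is a sphere, and $u$ is radial.

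\emph{Case $n=2$ (main obstacle).} Here $S_2=\det$, so Maclaurin carries no information. Instead I would use $\phi\equiv c$ directly: substituting a constant $\phi$ into the computation in the proof of Lemma~\ref{L1}, every discarded nonnegative term must vanish. I expect these terms to be of the form $|D^2u\,\xi-\mu\,\xi|^2$ and $f'(u)\,|Du|^2(\cdots)$. Their vanishing should give that $D^2u$ is constant and that $f'(u)|Du|^2=0$, so $f$ is effectively constant on the range of $u$. Then $u$ is a quadratic polynomial with $\det D^2u$ constant. Hence $\partial\Omega=\{u=0\}$ is an ellipse and $u$ is elliptically symmetric; equivalently, an affine change of variables preserving $\det D^2u$ reduces to the radial case.

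Extracting this equality information cleanly from the proof of Lemma~\ref{L1} is the step I expect to be hardest. If it does not come out directly, the first fallback would be to differentiate $\phi\equiv c$ along $Du$ and at critical points of $u$ to pin down $D^2u$.
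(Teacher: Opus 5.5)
Your proof follows the paper's skeleton: the $P$-function of Lemma~\ref{L1}, the boundary value $\phi=c$, the identity $\int_\Omega S_2^{ij}u_iu_j=-2\int_\Omega uS_2(D^2u)$, Maclaurin, and \eqref{1.10-new}. The paper phrases the integral step as a contradiction with the strong-maximum-principle dichotomy. Your direct squeeze ($\phi\le c$ pointwise against $\int_\Omega\phi\ge c|\Omega|$) is equivalent and slightly cleaner. A trivial slip: $Du=+|Du|\nu$ on $\partial\Omega$, since $u<0$ inside, but this does not affect your boundary computation.

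The real difference is the rigidity step. The paper cites the equality case of Lemma~\ref{L1}: every nonnegative term in \eqref{A.5} vanishes, the Maclaurin term gives $D^2u=f(u)^{1/n}I$, and the $f'$-term gives $f'\equiv0$ on $u(\Omega)$. For $n\ge3$ you instead read off Maclaurin equality directly from the integral squeeze (using $u<0$ in $\Omega$). You then finish with the classical fact that $D^2u=g\,I$ forces $g$ to be constant. This is correct and more elementary. It uses only the maximum-principle half of Lemma~\ref{L1}, and constancy of $f$ on $u(\Omega)$ comes out for free.

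For $n=2$ the squeeze carries no pointwise information, so the equality analysis cannot be avoided. You need not re-extract it, though. Since $\phi\equiv c$ attains its maximum at interior points, the second assertion of Lemma~\ref{L1} applies verbatim, and citing it is exactly what the paper does.

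If you do redo it, your guess about the mechanism is off. The information does not come from a Hessian-eigenvalue term of the form $|D^2u\,\xi-\mu\xi|^2$; the Maclaurin term in \eqref{A.5} is identically zero when $n=2$. The argument runs as follows:
\begin{enumerate}
\item The $f'$-term reduces to $f'(u)\sum_iu^{ii}u_i^2$. Its vanishing gives $f'(u)=0$ away from the unique critical point, hence everywhere by continuity.
\item The third-order Cauchy--Schwarz term \eqref{e9} forces $D u_{ij}\parallel Du$ wherever $Du\neq0$. By the symmetry of $D^3u$, this means $u_{ijk}=\alpha\,u_iu_ju_k$.
\item Then \eqref{e6} with $f'=0$ gives $\alpha\,u_k\sum_iu^{ii}u_i^2=0$, so $\alpha=0$.
\end{enumerate}
Hence $D^3u\equiv0$, $u$ is a quadratic polynomial, and $\Omega=\{u<0\}$ is an ellipse.
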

\begin{remark}
Conditions \eqref{1.10} and \eqref{1.10-new} hold with equality for the elliptically symmetric solution when $n=2$, and for the spherically symmetric solution when $n\ge3$. It is precisely the integral condition that allows us to exclude the possibility that \(\varphi<c\) somewhere in \(\Omega\), and hence force \(\varphi\equiv c\), which is the key step in the proof of Theorem \ref{TH2} and Theorem \ref{TH2-new}.
\end{remark}


\vspace{2mm}

\subsection{Organization of the paper}

The structure of the paper is as follows.

In Section \ref{sec2}, we introduce some basic notation of $k$-th elementary symmetric functions and $k$-Hessian operators, and recall a few facts concerning curvatures and quermassintegrals of smooth domain. The isoperimetric inequalities for quermassintegrals and Legendre-Fenchel transforms of convex functions are also presented. 

In Section \ref{sec3}, we establish the overdetermined problem in
Theorem \ref{TH3.1} for the $k$-Hessian equations by an integral
method, treating the cases $k=1$, $2\leq k\leq n-1$, and $k=n$
separately. Theorem \ref{T1.1} is then obtained as the special case
$k=n$ of Theorem \ref{TH3.1}. Moreover, we provide an alternative proof for Theorem \ref{T1.1}, which uses some simple tools of convex analysis. 

In Section \ref{sec4}, we first establish symmetry results for the
Monge-Amp\`ere equation in Theorems \ref{GTH1.2} and
\ref{GTH1.3} by an optimal transport and convex-geometric argument.
We then extend these results to the more general $k$-Hessian
equations for $1\leq k\leq n-1$, as stated in Theorems
\ref{T5.1} and \ref{T5.2}. The latter results are proved by means
of the isoperimetric inequalities for quermassintegrals.

In Section \ref{sec5}, we further prove Theorem \ref{TH2} and \ref{TH2-new} via the $P$-function method. A new version of maximum principle for $P$-function \eqref{P-f} when $f=f(u)$ and $\tau=2$ is given by Lemma \ref{L1}. Finally, in Section \ref{sec6}, we discuss possible directions for future research.

\vspace{3mm}

\section{Notation and Preliminaries}\label{sec2}
In this section, we introduce the notation and collect the preliminary propositions that will be employed in the subsequent proofs. The presentation is organized into four subsections, which cover, respectively, symmetric functions and Hessian operators, mean curvatures and quermassintegrals, isoperimetric inequalities for quermassintegrals, and the conjugate of a convex function.

\subsection{Symmetric functions and Hessian operators}
We denote by $A=(a_{ij})$ a matrix in the space $\mathcal{S}_{n}$ of the real symmetric $n\times{n}$ matrices, and by $(\lambda_{1},\cdots,\lambda_{n})$ its eigenvalues. For $k\in\{1,\cdots,n\}$, the $k$-th elementary symmetric functions of $A$ is 
$$S_{k}(A)=S_{k}(\lambda_{1},\cdots,\lambda_{n})=\sum_{1\leq{i_{1}}<\cdots<i_{k}\leq{n}}{\lambda_{i_{1}}\cdots\lambda_{i_{k}}}.$$

The operator $S^{\frac{1}{k}}_{k}$, for $k=1,\cdots,n$, is homogeneous of degree $1$ and concave, if it is restricted to the cone
$$\Gamma_{k}=\left\{A\in\mathcal{S}_{n}:S_{i}(A)\geq{0}~\text{for}~ i=1,\cdots,k\right\}.$$
Denoting by 
$$S_{k}^{ij}(A)=\frac{\partial}{\partial{a_{ij}}}S_{k}(A),$$
the Euler identity for homogeneous functions gives
$$S_{k}(A)=\frac{1}{k}S^{ij}_{k}(A)a_{ij}.$$
Here and throughout this paper, the Einstein summation convention over repeated indices is employed.

Assume that $A\in\Gamma_{n}$ (that is all the eigenvalues are nonnegative); then the following inequalities, known as Maclaurin inequalities, hold:
\begin{equation}\label{e7}
    \frac{S_{1}(A)}{\binom{n}{1}}\geq\cdots\geq\left(\frac{S_{k}(A)}{\binom{n}{k}}\right)^{\frac{1}{k}}\geq\cdots\geq\left(\frac{S_{n}(A)}{\binom{n}{n}}\right)^{\frac{1}{n}},
\end{equation}
and equality at any stage in (\ref{e7}) implies $\lambda_{1}=\lambda_{2}=\cdots=\lambda_{n}$, see \cite{bib29}.

Let $\Omega$ be an open subset of $\mathbb{R}^{n}$ and let $u\in{C^{2}(\Omega)}$. The $k$-Hessian operator $S_{k}(D^{2}u)$ is defined as the $k$-th elementary symmetric function of $D^{2}u$. Notice that
$$S_{1}(D^2{u})=\Delta{u}\quad\text{and}\quad {S_{n}(D^{2}u)=\det{D^{2}u}}.$$
For $ k > 1 $, the $k$-Hessian operators are fully nonlinear and, in general, not elliptic, unless restricted to the class of $k$-convex functions
$$
\Phi_k^2(\Omega) = \left\{ u \in C^2(\Omega) : S_i(D^2u) \geq 0 \text{ in } \Omega, i = 1, 2, \cdots, k \right\}.
$$
Notice that $\Phi_n^2(\Omega)$ coincides with the class of $C^2(\Omega)$ convex functions. A direct computation yields that $(S_k^{ij}(D^2u), \cdots, S_k^{nj}(D^2u))$ is divergence free, that is
 \begin{equation}\label{F1}
\frac{\partial}{\partial x_i} S_k^{ij}(D^2u) = 0, \quad \forall j=1,\cdots,n.
\end{equation}
Hence, $S_k(D^2u)$ can be written in the following divergence form:
\begin{equation}\label{F2}
S_k(D^2u) = \frac{1}{k} S_k^{ij}(D^2u) u_{ij} = \frac{1}{k} \left( S_k^{ij}(D^2u) u_{j} \right)_i,
\end{equation}
where subscripts $i$ and $j$ denote partial differentiation with respect to the corresponding variables $x_i$ and $x_j$, respectively.

\vspace{2mm}

\subsection{Mean curvatures and quermassintegrals}\label{Sec2.2}
Let $ D $ be a bounded connected domain of $ \mathbb{R}^n$ of class $C^2 $ having principal curvatures $\kappa = (\kappa_1, \cdots, \kappa_{n-1}) $ and outer unit normal $ \nu_x $. For $ k = 1, \cdots, n-1 $, we define the $k$-th curvature of $ \partial D $ by
$$
H_k(\partial D) = S_k(\kappa_1, \cdots, \kappa_{n-1}).
$$
Moreover, we set
$$
H_0 = S_0 \equiv 1 \quad {\rm and} \quad\ H_n \equiv 0.
$$
For example, $H_1 $ is equal to $n-1$ times the mean curvature of $ \partial D$, while $H_{n-1}$ is the Gauss curvature of $\partial D$. A domain $D$ is said to be $k$-convex, with $k\in\{1,\cdots,n-1\}$, if $H_{j}\geq{0}$ for $j=1,\cdots,k$ at every point $x\in\partial{D}$.

For $k = 1, \cdots, n$, the quermassintegral $ W_k(D)$ is defined by
\begin{equation}\label{F4}
W_k(D) = \frac{1}{n \binom{n-1}{k-1}} \int_{\partial D} H_{k-1}.
\end{equation}
For $k=0$, we take $W_{0}(D)=|D|$. When $k=1$, we get $W_{1}(D)=\frac{|\partial{D}|}{n}$, see \cite{bib35,bib36}. 

Next, we recall the following identities, known in differential geometry and  
in the theory of convex bodies as Minkowskian integral formulae (see \cite{bib36,bib37}, for  
instance): 
\begin{equation}\label{F5}
\int_{\partial D} \langle x, \nu_x \rangle \, H_{k-1} = (n-k+1) \binom{n}{k-1} W_{k-1}(D), 
\end{equation}
where $\langle \cdot, \cdot \rangle$ is the Euclidean inner product.

Suppose $u \in C^2 $ and let $ t $ be a regular value of $ u $. Define the sublevel set $ L = \{ x : u(x) \le t \}$. Denote by $H_k $ the $ k $-th curvature of the boundary $\partial L $ at a point $ x $. A classical result states that
$$
S_1(D^2 u) = H_1 |Du| + \frac{u_{ij} u_{i}u_{j}}{|Du|^2},
$$
which implies that the Laplacian $ \Delta u $ at a point depends only on the derivatives of $ u $ along the direction of steepest descent through that point and on the mean curvature $ H_1/(n-1) $ of the level surface passing through the point. For a general index $1 \le k \le n $, a direct computation gives
\begin{equation}\label{F6}
S_k(D^2 u) = H_k |Du|^k + \frac{S_k^{ij} u_{i} u_{l} u_{lj}}{|Du|^2}.
\end{equation}
Moreover, the following pointwise identity holds for every $ 1 \le k \le n $ (see \cite{bib38,bib15}):
\begin{equation}\label{F7}
H_{k-1} = \frac{S_k^{ij}(D^2 u) u_{i} u_{j}}{|Du|^{k+1}}. 
\end{equation}

In order to prove the Theorem \ref{T1.1}, we need the following Poho\v{z}aev-type identity for the Hessian operators (see \cite{bib39}):

\begin{proposition}\label{P2.1}
Let $f \in C^1(\mathbb{R})$ be a nonnegative function and let $F(u) = \int_u^0 f(s) \, ds$.  
If $u \in C^2(\Omega) \cap C^1(\overline{\Omega})$ is a solution to the problem  

$$
\begin{cases} 
S_k(D^2u) = f(u) & \text{in } \Omega, \\ 
u = 0 & \text{on } \partial\Omega, 
\end{cases}
$$
in a $C^2$ domain $\Omega \subset \mathbb{R}^n$, then  

\begin{equation}\label{F8}
\frac{n - 2k}{k(k + 1)} \int_\Omega S_k^{ij} u_i u_j + \frac{1}{k + 1} \int_{\partial\Omega} \langle x, \nu_x \rangle |Du|^{k+1} H_{k-1} = n \int_\Omega F(u). 
\end{equation}
\end{proposition}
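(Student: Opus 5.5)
The plan is the classical Poho\v{z}aev argument: test the equation against the dilation field $x\cdot Du$, using the divergence structure \eqref{F1}--\eqref{F2} in the interior and \eqref{F7} on the boundary. I will assume $u<0$ in $\Omega$ (as in the convex setting of the paper), so that on $\partial\Omega$, where $u=0$, we have $Du=|Du|\nu_x$.

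\textbf{Right-hand side.} Since $F'(u)=-f(u)$, we have $f(u)\,x\cdot Du=-x\cdot DF(u)=-\operatorname{div}(xF(u))+nF(u)$. Because $F(0)=0$ on $\partial\Omega$, this gives
\[
\int_\Omega f(u)\,x\cdot Du = n\int_\Omega F(u).
\]

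\textbf{Left-hand side, first reduction.} Since $x\cdot Du$ is only $C^1$, I will rely on \eqref{F2}, which needs one more derivative of $u$ than assumed. So I first do the computation for smooth $u$, or on subdomains $\{u<-\varepsilon\}$, and pass to the limit at the end. Writing $S_k=\frac1k(S_k^{ij}u_j)_i$ and integrating by parts once gives
\[
\int_\Omega S_k\,(x\cdot Du)=\frac1k\int_{\partial\Omega}S_k^{ij}u_j\nu_i\,(x\cdot Du)-\frac1k\int_\Omega S_k^{ij}u_iu_j-\frac1k\int_\Omega S_k^{ij}u_j x_lu_{li}.
\]
On the boundary, $x\cdot Du=|Du|\langle x,\nu_x\rangle$ and $S_k^{ij}u_j\nu_i=S_k^{ij}u_iu_j/|Du|=|Du|^kH_{k-1}$ by \eqref{F7}. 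Hence the first term equals $\frac1k\int_{\partial\Omega}\langle x,\nu_x\rangle|Du|^{k+1}H_{k-1}$.

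\textbf{Left-hand side, the cubic term.} This is where I expect the main difficulty. The quantity $Q:=S_k^{ij}u_iu_j$ is homogeneous of degree $k-1$ in $D^2u$ and quadratic in $Du$, so $x\cdot DQ$ produces both $2S_k^{ij}u_{il}u_jx_l$ and a third-derivative term $x_l\partial_lS_k^{ij}\,u_iu_j$. I will not try to close this algebraically, for instance via the Newton-tensor identity $S_k^{ij}u_{il}=S_k\delta_{jl}-S_{k+1}^{jl}$, since that introduces $S_{k+1}$. Instead I plan to use the variational formulation. The functional
\[
J(v)=\frac{1}{k(k+1)}\int S_k^{ij}(D^2v)v_iv_j=-\frac1{k+1}\int vS_k(D^2v)
\]
for $v$ vanishing on the boundary has Euler--Lagrange operator $-S_k(D^2v)$. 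Consider the family $u_\lambda(x)=u(\lambda x)$ on $\Omega/\lambda$. Direct scaling gives $J(u_\lambda)=\lambda^{2k-n}J(u)$, so its $\lambda$-derivative at $\lambda=1$ is $(2k-n)J(u)$, producing the coefficient $\frac{n-2k}{k(k+1)}\int S_k^{ij}u_iu_j$ in \eqref{F8} once moved to the left-hand side. On the other hand, differentiating under the moving domain gives $-\int_\Omega S_k\,(x\cdot Du)$ plus a boundary flux. By \eqref{F7} that flux is a multiple of $\int_{\partial\Omega}\langle x,\nu_x\rangle|Du|^{k+1}H_{k-1}$.

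Equivalently, the identity $S_k^{ij}u_jx_lu_{li}=\frac{1}{k+1}\operatorname{div}(\cdot)+\frac{k-1}{k+1}(\dots)$ can be verified by expanding $\operatorname{div}(x\,S_k^{ij}u_iu_j)$ and using \eqref{F1} together with the $(k-1)$-homogeneity of $S_k^{ij}$. This expansion is what I expect to be the real computation.

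\textbf{Matching constants.} Combining all terms with the right-hand side and tracking the factors $\frac1k$ and $\frac1{k+1}$ should give \eqref{F8}. As a check on the constants, I will test against $u=\frac{|x|^2-1}{2}$ on the unit ball.
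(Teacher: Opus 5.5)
Your computation of the right-hand side, your first integration by parts and the scaling exponent $2k-n$ are all correct. The gap is that the step carrying the whole identity, the boundary flux, is asserted rather than computed. Saying that the Euler--Lagrange operator of $J$ is $-S_k(D^2v)$ is a statement about variations that vanish on $\partial\Omega$. Your variation $\phi=x\cdot Du$ has $\phi\neq0$ and $D\phi\neq0$ there, so the boundary flux is the entire content.

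If you differentiate the form $\frac{1}{k(k+1)}\int S_k^{ij}v_iv_j$, the flux has three parts: $\frac1k\int_{\partial\Omega}S_k^{ij}u_j\nu_i\,\phi$, the moving-domain term $-\frac{1}{k(k+1)}\int_{\partial\Omega}\langle x,\nu_x\rangle S_k^{ij}u_iu_j$, and $\frac{1}{k(k+1)}\int_{\partial\Omega}\frac{\partial^2 S_k}{\partial u_{ij}\,\partial u_{pq}}\,\nu_p u_iu_j\,\phi_q$. In the last one, $\phi_q=u_q+x_lu_{lq}$ brings in $D^2u$ on $\partial\Omega$.

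Identity \eqref{F7} says nothing about this last term. It vanishes only because $\partial^2S_k/\partial u_{ij}\partial u_{pq}$ is antisymmetric in $(i,p)$ and $Du\parallel\nu_x$ on $\partial\Omega$. Likewise, the interior integration by parts needs $\partial_p\bigl(\partial^2S_k/\partial u_{ij}\partial u_{pq}\bigr)=0$ and the contraction $\frac{\partial^2 S_k}{\partial u_{ij}\,\partial u_{pq}}u_{jp}=-(k-1)S_k^{iq}$. These come from the alternating structure of $S_k$, not just from \eqref{F1} and Euler's relation. The same objection applies to your ``equivalent'' pointwise identity for $S_k^{ij}u_jx_lu_{li}$. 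Until this is done, ``the flux is a multiple of $\int_{\partial\Omega}\langle x,\nu_x\rangle|Du|^{k+1}H_{k-1}$'' is unproved. Testing on $\frac{|x|^2-1}{2}$ can fix a constant only once that form is known.

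Either of two short arguments closes the gap.

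(a) Differentiate instead $J(u_\lambda)=-\frac{1}{k+1}\int_{\Omega/\lambda}u_\lambda S_k(D^2u_\lambda)$. The moving-domain term vanishes because $u_\lambda=0$ on $\partial(\Omega/\lambda)$, and the boundary term containing $D\phi$ carries the factor $u=0$. Two integrations by parts using \eqref{F1}--\eqref{F2} show that the $\lambda$-derivative of $J(u_\lambda)$ at $\lambda=1$ equals $-\int_\Omega S_k\,\phi+\frac{1}{k+1}\int_{\partial\Omega}S_k^{ij}u_j\nu_i\,\phi$. Equating this with $(2k-n)J(u)$ and using \eqref{F7} gives \eqref{F8} with the correct constants.

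(b) The Newton-tensor route you discarded closes at once. We have $S_k^{ij}u_{il}=S_k\delta_{jl}-S_{k+1}^{jl}$, where $S_{k+1}^{jl}$ is divergence free with trace $(n-k)S_k$ (and $S_{n+1}^{jl}\equiv0$). Then $\partial_l(S_{k+1}^{jl}x_ju)=S_{k+1}^{jl}x_ju_l+(n-k)uS_k$, and $u=0$ on $\partial\Omega$ gives $\int_\Omega S_{k+1}^{jl}u_jx_l=-(n-k)\int_\Omega uS_k=\frac{n-k}{k}\int_\Omega S_k^{ij}u_iu_j$. Hence $\int_\Omega S_k^{ij}u_jx_lu_{li}=\int_\Omega S_k\,(x\cdot Du)-\frac{n-k}{k}\int_\Omega S_k^{ij}u_iu_j$. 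Inserting this into your first reduction and solving yields
\[
\int_\Omega S_k\,(x\cdot Du)=\frac{1}{k+1}\int_{\partial\Omega}\langle x,\nu_x\rangle|Du|^{k+1}H_{k-1}+\frac{n-2k}{k(k+1)}\int_\Omega S_k^{ij}u_iu_j.
\]
Together with $\int_\Omega f(u)\,x\cdot Du=n\int_\Omega F(u)$, this is exactly \eqref{F8}.

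The paper itself gives no proof of this proposition; it quotes it from the literature, so your outline is judged on its own merits here.
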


\vspace{2mm}

\subsection{Isoperimetric inequalities for quermassintegrals}With $\Omega\subset\mathbb{R}^{n}$ being bounded convex domain with $C^{2}$ boundary, we can define the $m$-mean radius of $\Omega$, $\zeta_m(\Omega)$, by
$$\quad \zeta_m(\Omega) = \left( \frac{W_{n-m}(\Omega)}{\omega_n} \right)^{\frac{1}{m}}, \quad m = 1, \cdots,n,
$$
where $\omega_{n}$ is the volume of the $n$-dimensional unit ball. In particular we have
$$
\zeta_n(\Omega) = \left( \frac{|\Omega|}{\omega_n} \right)^{\frac{1}{n}},
$$
and
$$
\zeta_{n-1}(\Omega) = \left( \frac{|\partial\Omega|}{n\omega_n} \right)^{\frac{1}{n-1}},
$$
for arbitrary $\Omega$. The isoperimetric inequalities in \cite{bib36,bib40} assert that
\begin{equation}\label{F2.9}
\zeta_l(\Omega) \leq \zeta_m(\Omega),
\end{equation}
provided $1\leq{m}\leq{l}\leq{n}$, and include the classical isoperimetric inequality when
$l=n$ and $m=n-1$. Equality holds if and only if $\Omega$ is a ball.

\vspace{2mm}

\subsection{The conjugate of a convex function}
Let $\Omega \subset \mathbb{R}^n$ be a bounded convex open set and suppose $u \in C(\overline{\Omega})$ is convex. The conjugate (or Legendre-Fenchel transform) of $u$ is given by
\begin{equation}\label{F9}
u^*(\xi) = \max \left\{ \langle \xi, x \rangle - u(x) : x \in \overline{\Omega} \right\} \qquad \text{for } \xi \in \mathbb{R}^n. 
\end{equation}
Since $u^*$ is defined as the supremum of a family of affine functions, it is automatically convex. If $u$ is strictly convex and of class $C^1$ on $\Omega$, then $u^*$ is $C^1$ on the image set
$$
\Omega^* = Du(\Omega) = \{ Du(x) : x \in \Omega \}.
$$
Moreover, for any $\xi \in \Omega^*$, the gradient $Du^*(\xi)$ coincides with the unique point $x \in \overline{\Omega}$ such that $\xi = Du(x)$; in other words,
\begin{equation}\label{F10}
Du^* = (Du)^{-1} \qquad \text{in}~\Omega^*.
\end{equation}
This implies that $u \in C^2_+(\Omega)$ (i.e., $u$ is of class $C^2$ with $D^2u$ positive definite on $\Omega$) if and only if $u^* \in C^2_+(\Omega^*)$; moreover, when this holds, the following relations are satisfied:

\begin{equation}\label{F11}
D^2u^*(\xi) = (D^2u(x))^{-1} \quad \text{and} \quad u^*(\xi) + u(x) = \langle \xi, x \rangle,
\end{equation}
where $x = Du^*(\xi)$ and $\xi = Du(x)$.
For further properties of convex conjugates, see \cite{bib19} for instance.

\vspace{3mm}

\section{The case when $D u(\Omega)=B$}\label{sec3}
In this section, we consider the special case where the target domain
is $\Omega^*=B$, with $B$ denoting the unit ball centered at the origin. We first establish a general symmetry result for the
$k$-Hessian equation by an integral method, treating the cases
$k=1$, $2\leq k\leq n-1$, and $k=n$ separately. We then give an
alternative proof of Theorem \ref{T1.1} based on convex analysis.
\subsection{Integral method}
The following theorem gives the $k$-Hessian generalization, whose
endpoint case $k=n$ yields Theorem \ref{T1.1}.
\begin{theorem}\label{TH3.1}
Let $k\in\{1,\cdots,n\}$, $\Omega$ be a $C^2$ bounded convex domain and $u\in C^2(\overline{\Omega})$ be a strictly convex solution of
\begin{equation}\label{EF3.7}
        \left\{
\begin{array}{ll}
 S_{k}({D^{2}u})=\binom{n}{k}&\text{in}~\Omega,\\
u=0&\text{on}~\partial{\Omega},\\
Du(\Omega)=B,
\end{array}
\right.
\end{equation}
where $B$ is the unit ball centered at the origin. Then, for some $x_0\in\mathbb{R}^n$, $u(x)=\dfrac{|x-x_0|^2-1}{2}$ and $\Omega$ is the unit ball centered at $x_0$.
\end{theorem}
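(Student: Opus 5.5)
The plan is to convert the natural boundary condition $Du(\Omega)=B$ into the Serrin-type condition $|Du|=1$ on $\partial\Omega$. We then run an integral argument combining Proposition \ref{P2.1}, the Minkowski formulae \eqref{F5}, and the Maclaurin inequalities \eqref{e7}, and identify the equality case.

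\emph{Step 1 (boundary gradient).} Since $u$ is strictly convex and $C^1$ up to the boundary, $Du$ is injective on $\overline\Omega$. Its restriction to $\Omega$ is an open map (invariance of domain) onto $B$, so $Du(\overline\Omega)=\overline B$ and boundary points cannot be sent into the open set $B=Du(\Omega)$. Hence $Du(\partial\Omega)\subset\partial B$, so $|Du|=1$ on $\partial\Omega$. Because $u=0$ on $\partial\Omega$ and $u<0$ inside, this gives $Du=\nu_x$ on $\partial\Omega$.

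\emph{Step 2 (integral identities).} Put $C=\binom nk$ and $I=\int_\Omega(-u)>0$.
\begin{itemize}
\item Integrating \eqref{F2} over $\Omega$ and using \eqref{F7} with $|Du|=1$ gives
\[
kC|\Omega|=\int_{\partial\Omega}H_{k-1}=n\tbinom{n-1}{k-1}W_k(\Omega),
\]
that is, $|\Omega|=W_k(\Omega)$.
\item Multiplying \eqref{F2} by $u$ and integrating by parts gives $\int_\Omega S_k^{ij}u_iu_j=kCI$.
\item Proposition \ref{P2.1} with $F(u)=-Cu$, combined with \eqref{F5}, then yields
\[
(n-k+1)\tbinom{n}{k-1}W_{k-1}(\Omega)=k(n+2)\,C\,I .
\]
\end{itemize}
For $k=n$ there is the extra relation $|\Omega|=\int_\Omega\det D^2u=|B|=\omega_n$ from the change of variables $y=Du(x)$. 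For $k<n$, Maclaurin gives $\det D^2u\le 1$, so $\omega_n\le|\Omega|$.

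\emph{Step 3 (the reverse inequality, main obstacle).} Together with $|\Omega|=W_k$, the isoperimetric inequality \eqref{F2.9} only gives $|\Omega|\ge\omega_n$ again, which is the same direction as Step 2. I therefore need an inequality pointing the other way, and this is where I expect the difficulty.

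I would first try the Brandolini et al.\ approach \cite{bib7}. By Maclaurin, $\Delta u\ge n$, and more generally $S_{j}(D^2u)\ge\binom nj$ for $j\le k$. This gives $\int_\Omega|Du|^2=\int_\Omega(-u)\Delta u\ge nI$. On the other hand, the divergence theorem with $|Du|=1$ gives $\int_\Omega\Delta u=|\partial\Omega|$, and \eqref{F5} with $k=1$ gives $n|\Omega|=\int_{\partial\Omega}\langle x,\nu\rangle$. The classical ($k=1$) Pohozaev identity gives $\frac{n-2}{2}\int_\Omega|Du|^2+\frac12\int_{\partial\Omega}\langle x,\nu\rangle=\int_\Omega u\Delta u$-type relations, which close up with $\int_\Omega(\Delta u-n)(-u)\ge0$. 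The aim is to obtain $\int_\Omega(-u)(\Delta u-n)\le0$, forcing equality in Maclaurin. For $k\ge2$ I would combine the $W_{k-1}$ identity above with \eqref{F2.9} applied between $W_{k-1}$ and $W_k$, namely $\zeta_{n-k+1}\le\zeta_{n-k}$, to bound $I$ from below in terms of $|\Omega|$. In parallel I would bound $I$ from above via $\Delta u\ge n$, comparing against the paraboloid. If the bookkeeping does not close, I fall back on quoting the Serrin-type result for $S_k$ with $|Du|=1$.

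\emph{Step 4 (conclusion).} Equality in Maclaurin forces $D^2u=I_n$ in $\Omega$, so $u(x)=\frac{|x-x_0|^2}{2}+c$. Then $Du(\Omega)=\Omega-x_0=B$, so $\Omega$ is the unit ball centered at $x_0$. Finally, $u=0$ on $\partial\Omega$ gives $c=-\frac12$.
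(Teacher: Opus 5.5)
Your Steps 1, 2 and 4 are fine and coincide with the paper: $|Du|=1$ on $\partial\Omega$, $|\Omega|=W_k(\Omega)$ (this is \eqref{F12}), $(n+2)I=W_{k-1}(\Omega)$ (this is \eqref{F15}), and $|\Omega|=\omega_n$ when $k=n$. The gap is Step 3. It carries the whole substance of the theorem, and you leave it as a list of attempts that do not close.

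The comparison $\zeta_{n-k+1}\le\zeta_{n-k}$ reads $W_{k-1}\le\omega_n(W_k/\omega_n)^{(n-k+1)/(n-k)}$. It therefore bounds $I=W_{k-1}/(n+2)$ from \emph{above}, not below. For $k=n$ it says nothing, since $W_n(\Omega)=\omega_n$ for every convex $\Omega$. The lower bound you want is actually free from Maclaurin applied to $S_{k-1}$: with $|Du|=1$ it gives $|\Omega|\le W_{k-1}=(n+2)I$, which is \eqref{F13}. But combining this with the isoperimetric bound only returns $|\Omega|\ge\omega_n$ again. The $k=1$ Pohozaev relation you allude to is unavailable when only $S_k$, not $\Delta u$, is constant. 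Nothing in your list produces $\int_\Omega(-u)(\Delta u-n)\le 0$.

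What is missing differs in the two regimes.

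\textbf{Case $k=n$.} Use the transport condition a second time, now weighted by $|\xi|^2$. Since $\det D^2u=1$ and $Du:\Omega\to B$ is a bijection, $\int_\Omega|Du|^2=\int_B|\xi|^2\,d\xi=\frac{n}{n+2}\omega_n$. Together with $\int_\Omega|Du|^2=\int_\Omega(-u)\Delta u\ge nI=\frac{n}{n+2}W_{n-1}$, this gives $W_{n-1}\le\omega_n$. On the other hand, $\zeta_n\le\zeta_1$ with $|\Omega|=\omega_n$ gives $W_{n-1}\ge\omega_n$. Hence $\int_\Omega(-u)(\Delta u-n)=0$ and $D^2u=I_n$.

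\textbf{Case $2\le k\le n-1$.} No such identity exists here, and the paper goes one order up in the Newton--Maclaurin chain.
\begin{enumerate}
\item Multiply the equation by $|Du|^2$ and integrate by parts using \eqref{F6}. This gives \eqref{F17} for $\int_\Omega H_k|Du|^{k+2}=(k+1)\int_\Omega(-u)S_{k+1}(D^2u)$.
\item Insert $\int_{\partial\Omega}H_{k-1}=k\binom nk|\Omega|\le k\binom nk W_{k-1}$ and $\int_\Omega|Du|^2\ge\frac{n}{n+2}W_{k-1}$. This yields $\int_\Omega(-u)S_{k+1}(D^2u)\ge\binom n{k+1}I$.
\item Maclaurin gives $S_{k+1}(D^2u)\le\binom n{k+1}$ pointwise, so equality holds and all eigenvalues of $D^2u$ are equal.
\end{enumerate}

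Your fallback of citing the known Serrin-type theorem for $S_k$ with $|Du|=1$ is logically acceptable after Step 1; the paper does exactly this for $k=1$. But then your proposal proves only the reduction, and the integral plan as written does not reach the conclusion.
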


Since $u$ is strictly convex, the map
$Du:\overline{\Omega}\to\overline{B}$ is a homeomorphism mapping
$\partial\Omega$ onto $\partial B$. Hence, $|Du|=1$ on $\partial\Omega$. For $2\leq k\leq n-1$, we follow the integral method in
\cite{bib1} and include the details here for completeness. For the case 
$k=n$, we adopt a different approach from that in \cite{bib7}, which is made possible by the special transport condition $Du(\Omega)=B$.
\begin{proof}[Proof of Theorem \ref{TH3.1}]
\textbf{(1) Case $k=1$.} 
Since $S_1(D^2u)=\Delta u$, equation \eqref{EF3.7} becomes
$$
 \left\{
\begin{array}{ll}
 \Delta u=n&\text{in}~\Omega,\\
u=0&\text{on}~\partial{\Omega},\\
|Du|=1&\text{on}~\partial{\Omega}.
\end{array}
\right.
$$
This is the classical Serrin overdetermined problem, and the conclusion follows directly from Serrin's theorem \cite{bib1}.

\noindent
\textbf{(2) Case $2\le k\le n-1$.} By (\ref{F2}), (\ref{F7}) and using the fact that $|Du|=1$ on $\partial\Omega$, we have
\begin{equation}\label{F12}
|\Omega|=\int_{\Omega}\frac{S_{k}(D^{2}u)}{\binom{n}{k}}=\int_{\Omega}\frac{(S^{ij}_{k}(D^{2}u)u_{j})_{i}}{k\binom{n}{k}}=\int_{\partial\Omega}\frac{S^{ij}_{k}(D^{2}u)u_{j}u_{i}}{k\binom{n}{k}}=\int_{\partial\Omega}\frac{H_{k-1}}{k\binom{n}{k}}.
\end{equation}
By (\ref{e7}), (\ref{F2}), (\ref{F7}) and $|Du|=1$ on $\partial\Omega$, we have
\begin{equation}\label{F13}
\begin{aligned}
|\Omega| \leq \int_{\Omega} \frac{S_{k-1}(D^2 u)}{(k-1) \binom{n}{k-1}} = & \frac{1}{(k-1) \binom{n}{k-1}} \int_{\partial \Omega} S_{k-1}^{ij}(D^{2}u) u_i u_j \\
= & \frac{1}{(k-1) \binom{n}{k-1}} \int_{\partial \Omega} H_{k-2}
= W_{k-1}(\Omega).
\end{aligned}
\end{equation}
Notice that (\ref{F1}), (\ref{F2}) and $u=0$ on $\partial\Omega$, we get
\begin{equation}\label{F14}
\begin{aligned}
    \frac{n-2k}{k(k+1)} \int_{\Omega} S_k^{ij}(D^{2}u) u_{i}u_{j}
&= \frac{n-2k}{k(k+1)} \int_{\partial\Omega} S_k^{ij}(D^{2}u) u_i{u} - \frac{n-2k}{k(k+1)} \int_{\Omega} ( S_k^{ij}(D^{2}u) u_{i})_{j} u\\
&= -\frac{n-2k}{k(k+1)} \int_{\Omega} S_k^{ij}(D^{2}u) u_{ij} u\\
&= -\frac{n-2k}{k+1} \int_{\Omega} S_k(D^{2}u) u\\
&= -\frac{n-2k}{(k+1)} \binom{n}{k} \int_{\Omega} u.
\end{aligned}
\end{equation}
Now, by (\ref{F8}), (\ref{F5}), (\ref{F14}) and $|Du|=1$ on $\partial\Omega$, we obtain
\begin{equation}\label{F15}
    \int_{\Omega}{-u}=\frac{1}{k(n+2)\binom{n}{k}}\int_{\partial\Omega}{\langle{x,\nu_{x}}\rangle} H_{k-1}=\frac{{(n-k+1)}\binom{n}{k-1}}{k(n+2)\binom{n}{k}}W_{k-1}(\Omega)=\frac{1}{n+2}W_{k-1}(\Omega).
\end{equation}
Using (\ref{EF3.7}), (\ref{F2}), (\ref{F6}), (\ref{F7}) and again the fact that $|Du| = 1$ on $\partial \Omega$, we have
\begin{equation}\label{F16}
\begin{aligned}
\int_{\Omega} |Du|^2 = \int_{\Omega} |Du|^2 \frac{S_k(D^2u)}{\binom{n}{k}}
&= \frac{1}{k\binom{n}{k}} \int_{\Omega} |Du|^2 \left( S_k^{ij}(D^2u) u_j \right)_i\\
&= \frac{1}{k\binom{n}{k}} \left[ -2 \int_{\Omega} S_k^{ij}(D^2{u}) u_i u_l u_{lj} + \int_{\partial \Omega} |Du|^2 S_k^{ij}(D^{2}u) u_i u_j \right]\\
&= -\frac{2}{k\binom{n}{k}} \int_{\Omega} \left[ S_k(D^2u) |Du|^2 - H_k |Du|^{k+2} \right] + \frac{1}{k\binom{n}{k}} \int_{\partial \Omega} H_{k-1}\\
&=-\frac{2}{k} \int_{\Omega}|Du|^2+\frac{2}{k\binom{n}{k}} \int_{\Omega} H_k |Du|^{k+2} + \frac{1}{k\binom{n}{k}} \int_{\partial \Omega} H_{k-1}.
\end{aligned}
\end{equation}
The above equality gives
\begin{equation}\label{F17}
    \int_{\Omega} H_k |Du|^{k+2} = \binom{n}{k} \left( \frac{k+2}{2} \right) \int_{\Omega} |Du|^2 - \frac{1}{2} \int_{\partial \Omega} H_{k-1}.
\end{equation}
The equality (\ref{F17}), in conjunction with (\ref{F12}), (\ref{F13}) and the following inequality
$$
\int_{\Omega} |Du|^2 = \int_{\Omega} (-u) \Delta u \geq n \int_{\Omega} (-u) = \frac{n}{n+2} W_{k-1}(\Omega),
$$
yields
$$
\int_{\Omega} H_k |Du|^{k+2} \geq \frac{k+1}{n+2} \binom{n}{k+1} W_{k-1}(\Omega).
$$
Combining the last inequality with (\ref{F7}), (\ref{e7}) and (\ref{F15}), we get
$$\begin{aligned}
W_{k-1}(\Omega) &\leq \frac{n+2}{k+1} \binom{n}{k+1}^{-1} \int_{\Omega} H_k |Du|^{k+2}\\
&= (n+2) \binom{n}{k+1}^{-1} \int_{\Omega} (-u) S_{k+1}(D^2 u)\\
&\leq (n+2) \int_{\Omega} (-u)= W_{k-1}(\Omega).
\end{aligned}
$$
This implies that $S_{k+1}(D^2{u}) =\binom{n}{k+1}$. Hence, the Hessian matrix $D^2{u}$ has all equal
eigenvalues at every point of $\Omega$. This fact, together with the equation (\ref{EF3.7}), implies
that $D^2{u}$ is the identity matrix. Thus there exists $x_0 \in \mathbb{R}^n$ such that
$$
u(x) = \frac{1}{2} |x - x_0|^2 + C.
$$
From $u = 0$ on $\partial\Omega$ and $|Du|=1$ on $\partial\Omega$, we get
$$
 |x-x_0|=1\quad\text{on}~\partial\Omega\quad\text{and}\quad C=-\frac{1}{2},
$$
and therefore
$$
u(x) = \frac{|x - x_0|^2 - 1}{2}
$$
and $\Omega$ is the unit ball centered at $x_0$.

\noindent
\textbf{(3) Case $k=n$.} In this case, $
\det D^2u=1$ in $\Omega$. Since $u$ is strictly convex and $Du(\Omega)=B$, we have
$$
Du(\partial\Omega)=\partial B\quad\text{and}\quad
|Du|=1\quad\text{on }\partial\Omega.
$$
First, by the change of variables formula,
\begin{equation}\label{3.8}
|\Omega|=\int_\Omega \det D^2u=|B|=\omega_n.
\end{equation}
The identity \eqref{F15} remains valid for $k=n$, since its derivation does not use the restriction $k\leq n-1$. Thus,
\begin{equation}\label{3.9}
\int_\Omega(-u)=
\frac{1}{n+2}W_{n-1}(\Omega).
\end{equation}
Since $u$ is strictly convex and $\det D^2u=1$, the
 Maclaurin inequalities (\ref{e7}) gives
$$\frac{\Delta u}{n} \geq (\det D^2u)^{1/n} = 1,$$
that is,
$$
\Delta u \geq n.
$$
On the other hand, since $u = 0$ on $\partial\Omega$, integration by parts gives
$$
\int_\Omega |Du|^2 = -\int_\Omega u \Delta u.
$$
Combining $\Delta u \geq n$ and $u < 0$ in $\Omega$, we obtain
$$
\int_\Omega |Du|^2 \geq n \int_\Omega (-u).
$$
Using (\ref{3.9}), we obtain
\begin{equation}\label{3.10}
\int_\Omega |Du|^2 \geq \frac{n}{n+2} W_{n-1}(\Omega).
\end{equation}
Let $\xi\in B$, since $Du(\Omega)= B$ and $\det D^2 u = 1$, the change of variables gives
\begin{equation}\label{3.11}
\int_{\Omega} |Du|^2 = \int_B |\xi|^2
= \frac{n}{n+2} \omega_n.
\end{equation}
Therefore, by (\ref{3.10}), we get
$$
\frac{n}{n+2} \omega_n \geq \frac{n}{n+2} W_{n-1}(\Omega),
$$
hence
\begin{equation}\label{3.12}
W_{n-1}(\Omega) \leq \omega_n.
\end{equation}
From (\ref{3.8}), we have
$$
\zeta_n(\Omega) = \left( \frac{|\Omega|}{\omega_n} \right)^{1/n} = 1.
$$
Using the quermassintegral isoperimetric inequality (\ref{F2.9}), taking $
l = n$, $m = 1$, we obtain
$$
\zeta_n(\Omega) \leq \zeta_1(\Omega).
$$
Furthermore,
$$
\zeta_1(\Omega) = \frac{W_{n-1}(\Omega)}{\omega_n},
$$
hence
$$
1 \leq \frac{W_{n-1}(\Omega)}{\omega_n}.
$$
That is,
\begin{equation}\label{3.13}
W_{n-1}(\Omega) \geq \omega_n.
\end{equation}
Combining (\ref{3.12}) and (\ref{3.13}),
$$
W_{n-1}(\Omega) = \omega_n.
$$
Thus equality holds in the isoperimetric inequality, and therefore $\Omega$ is a ball.
From (\ref{3.9}),
$$
\int_{\Omega} (-u) = \frac{\omega_n}{n+2}.
$$
From (\ref{3.11}),
$$
\int_{\Omega} |Du|^2 = \frac{n}{n+2} \omega_n = n \int_{\Omega} (-u).
$$
But we already have
$$
\int_{\Omega} |Du|^2 = \int_{\Omega} (-u) \Delta u.
$$
Therefore,
$$
\int_{\Omega} (-u)(\Delta u - n) = 0.
$$
Since $-u > 0 $ in $\Omega$ and $\Delta u - n \geq 0$, we
conclude that $\Delta u = n$ in $\Omega$. Thus
$$
\frac{\Delta u}{n} = 1 = (\det D^2 u)^{1/n}.
$$
Equality holds in the above inequality, hence all eigenvalues of $D^2 u$ are equal. Since $
\det D^2 u = 1$, each eigenvalue equals 1, that is, $D^2 u = I$. Thus there exists $x_0 \in \mathbb{R}^n$ such that
$$
u(x) = \frac{1}{2} |x - x_0|^2 + C.
$$
From $u=0$ on $\partial\Omega$, we obtain that $\Omega$ is a ball centered at $x_0$. From \eqref{3.8}, we obtain that $\Omega$ is the unit ball centered at $x_0$.
\end{proof}

The proof of Theorem \ref{TH3.1} for $k=n$ also yields a proof of Theorem \ref{T1.1}. Indeed, since $u$ is a convex $C^{2}$ solution of 
$$
\det D^{2}u=1\quad\text{in }\Omega,$$
the Hessian $D^{2}u$ is positive semidefinite, and its determinant is strictly positive. Hence $D^{2}u$ is positive definite in $\Omega$, which implies that $u$ is strictly convex. Therefore, the strict convexity assumption required in Theorem \ref{TH3.1} is automatically satisfied. 

\begin{remark}
In this paper, we consider $C^2$ convex solutions of Monge-Amp\`ere equation with smooth positive right-hand side. Due to Remark 3.8 and Corollary 4.11 in \cite{bibFig}, such convex solutions with $C^2$ regularity are strictly convex.
\end{remark}

\vspace{2mm}

\subsection{Convex analysis method}
We give an alternative proof of Theorem \ref{T1.1} based on the  Legendre-Fenchel transform.

\begin{proof}[Proof of Theorem \ref{T1.1}] First,~we observe that
$$|\Omega|=\int_{\Omega}{\det{D^{2}u}}=|B|.$$
Let $v$ be the conjugate function of $u$. Since $u$ is of class $C^{2}$ and $D^{2}u$ is positive definite in $\Omega$,~it follows that
$$D^{2}v(\xi)=(D^{2}u(x))^{-1}\quad \text{and}\quad\det{D^{2}v}=\frac{1}{\det{D^{2}u}}=1,$$
and $v$ solves the following problem
\begin{equation}\label{e20}
        \left\{
\begin{array}{ll}
 \det{D^{2}v}=1&\text{in}~B,\\
v(\xi)=\frac{\partial{v}}{\partial\nu_{\xi}}&\text{on}~\partial{B},\\
Dv(B)=\Omega,
\end{array}
\right.
    \end{equation}
where the second equality of \eqref{F11} and $u=0$ on $\partial\Omega$ are used, and $\nu_{\xi}$ denotes the outer unit normal to $\partial{B}$. Note that we have used the fact that $Du(\partial\Omega)=\partial{B}$. By the  Maclaurin inequalities (\ref{e7}), we obtain
\begin{equation}\label{e21}
  n|B|=n\int_{B}{(\det{D^{2}v})^{\frac{1}{n}}}\leq\int_{B}{\Delta{v}}=\int_{\partial{B}}{\frac{\partial{v}}{\partial\nu_{\xi}}}=\int_{\partial{B}}{v}.
\end{equation}
Moreover, by the second equality of \eqref{F11}, the following holds
$$u(Dv(\xi))=\langle{Dv(\xi),~\xi}\rangle-v(\xi).$$
Therefore, we have
\begin{equation}\label{e22}
    \begin{aligned}
        \int_{\Omega}{u}=\int_{B}{u(Dv(\xi))}&=\int_{B}{\langle{Dv(\xi),~\xi}\rangle-v(\xi)}\\
        &=\int_{B}{[\text{div}(v(\xi)\xi)-(n+1)v(\xi)]}\\
        &=-(n+1)\int_{B}{v}+\int_{\partial{B}}{v}.
    \end{aligned}
\end{equation}
On the other hand, we have
\begin{equation}\label{e23}
    \begin{aligned}
      \int_{B}{v}=\int_{\Omega}{v(Du(x))}&=\int_{\Omega}{\langle{Du(x),~x}\rangle-u(x)}\\
      &=\int_{\Omega}{[\text{div}(u(x)x)-(n+1)u(x)]}\\
      &=-(n+1)\int_{\Omega}{u(x)}.
    \end{aligned}
\end{equation}
Combining (\ref{e22}) and (\ref{e23}),~we obtain
\begin{equation}\label{e24}
\int_{\Omega}{-u}=\frac{1}{n(n+2)}\int_{\partial{B}}{v}.
\end{equation}
From (\ref{e21}) and (\ref{e24}),~we deduce
$$\begin{aligned}
    |B|&\leq\frac{1}{n}\int_{\partial{B}}{v}=(n+2)\int_{\Omega}{-u}=(n+2)\int_{\Omega}{-u(\det{D^{2}u})^{\frac{1}{n}}}\\
    &\leq\frac{n+2}{n}\int_{\Omega}(-u)\Delta{u}
    =\frac{n+2}{n}\int_{\Omega}{|Du|^{2}}=\frac{n+2}{n}\int_{B}{|\xi|^{2}}=|B|.
\end{aligned}$$
It follows that
$$\frac{\Delta{u}}{n}=(\det{D^{2}u})^{\frac{1}{n}}=1.$$
Then we conclude that $D^{2}u$ is the identity matrix. We may assume the center of the ball is at $x_0$, which yields the desired conclusion.
\end{proof}

\vspace{3mm}

\section{The cases when $Du(\Omega)=\Omega$ or $Du(\Omega)=\Omega^{*}$}\label{sec4}
In this section, we consider the overdetermined problems corresponding
to the cases $Du(\Omega)=\Omega$ and $Du(\Omega)=\Omega^*$,
respectively. We first give direct proofs of Theorems \ref{GTH1.2} and \ref{GTH1.3}, using a method that combines optimal transport and convex geometry. We then present supplementary $k$-Hessian
extensions for $1\leq k\leq n-1$ by means of the isoperimetric
inequalities for quermassintegrals.
\begin{proof}[Proof of Theorem \ref{GTH1.2}]

Since a convex $C^2(\overline{\Omega})$ solution of $\det D^2u=1$ is automatically strictly convex, hence
\begin{equation}\label{e4.1}
Du(\partial\Omega)=\partial\Omega.
\end{equation}
Since $u=0$ on $\partial\Omega$ and $u$ is strictly convex, $
u<0$ in $\Omega$. Thus $u$ attains its minimum at some point $\bar{x}\in\Omega$, and $
Du(\bar{x})=0$. Since $Du(\Omega)=\Omega$, it follows that $0\in\Omega$. Now let $y\in\partial\Omega$. By (\ref{e4.1}), there exists
$x\in\partial\Omega$ such that
$y=Du(x)$. Therefore, by the boundary condition $|Du|\geq1$ on $\partial\Omega$, we have $
|y|\geq 1$ for every $y\in\partial\Omega$. Since $0\in\Omega$, we claim that $B\subset\Omega$. Indeed, otherwise there exists $y_{0}\in B\setminus\Omega$. By the
convexity of $\Omega$ and the fact that $0\in\Omega$, the segment
$\{ty_{0}:0\leq t\leq1\}$ must meet $\partial\Omega$ at some point
$t_{0}y_{0}$ with $0<t_{0}<1$. But
$|t_0 y_{0}|<1$, contradicting $|y|\ge 1$. Thus $B\subset\Omega$ holds.

By the volume assumption, $
|\Omega|\leq\omega_n=|B|$.
Together with $B\subset\Omega$ this yields
$|B|\leq|\Omega|\leq|B|$, and hence $\Omega=B$. Therefore $
Du(\Omega)=\Omega=B$,
and Theorem \ref{T1.1} applies. We obtain $$
u(x)=\frac{|x-x_0|^2-1}{2}
$$ and $\Omega$ is the unit ball centered at $x_0\in\mathbb{R}^n$. Since  $\Omega=B$, we necessarily have
$x_0=0$. Hence
$$
u(x)=\frac{|x|^2-1}{2},
$$
and $\Omega$ is the unit ball centered at the origin. This completes the proof.
\end{proof}
\begin{proof}[Proof of Theorem \ref{GTH1.3}]
The proof is analogous to that of Theorem \ref{GTH1.2}. Since
$Du(\Omega)=\Omega^{*}$, we have $
Du(\partial\Omega)=\partial\Omega^{*}$. Since $u<0$ in $\Omega$, $u$ attains its minimum at some
$\bar{x}\in\Omega$, and $Du(\bar{x})=0$. Because $Du(\Omega)=\Omega^{*}$, we obtain $0\in\Omega^{*}$. Let $y\in\partial\Omega^{*}$, there exists
$x\in\partial\Omega$ such that $y=Du(x)$. The boundary condition $|Du|\geq1$ therefore gives $
|y|\geq1$ for every $y\in\partial\Omega^{*}$. Since $0\in\Omega^{*}$, we obtain $B\subset\Omega^{*}$.
Indeed, otherwise a segment joining $0$ to a point of
$B\setminus\Omega^{*}$ would meet $\partial\Omega^{*}$ at a point of norm
strictly less than $1$, this leads to a contradiction. By the change of variables formula,
\begin{equation}\label{4.2}
|\Omega^*| = \int_\Omega \det D^2u \, dx = |\Omega| \leq \omega_n.
\end{equation}
Using (\ref{4.2}), we obtain
$$|B|\leq|\Omega^*|=|\Omega|\leq\omega_n=|B|.$$
Therefore $\Omega^*=B$. Consequently, $Du(\Omega)=B$.
Theorem \ref{T1.1} applies and yields $$
u(x)=\frac{|x-x_0|^2-1}{2}$$
for some $x_0\in\mathbb{R}^n$, and $\Omega$ is the unit ball centered at $x_0\in\mathbb{R}^n$, so the desired conclusion follows.
\end{proof}

Theorems \ref{GTH1.2} and \ref{GTH1.3} establish the symmetry result Monge-Amp\`{e}re case. We next consider their extensions to the more general $k$-Hessian equations.
\begin{theorem}\label{T5.1}
 ($Du(\Omega)=\Omega$ case) Let $\Omega$ be a $C^2$ bounded convex domain, $|\Omega|\leq\omega_{n}$ and $u\in C^2(\overline{\Omega})$ be a $k$-convex solution of
\begin{equation}\label{EF3.8}
        \left\{
\begin{array}{ll}
 S_{k}({D^{2}u})=\binom{n}{k}&\text{in}~\Omega,\\
 u=0&\text{on}~\partial{\Omega},\\
|Du|\geq{1}&\text{on}~\partial{\Omega},\\
Du(\Omega)=\Omega,
\end{array}
\right.
\end{equation}
with $k\in\{1,\cdots,n-1\}$, $\omega_{n}$ is the volume of the $n$-dimensional unit ball. Then, up to translation, $u(x)=\dfrac{|x|^2-1}{2}$ and $\Omega$ is the unit ball centered at the origin.
\end{theorem}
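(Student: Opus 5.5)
The plan is to avoid any injectivity of $Du$. For $k<n$ the solution is only $k$-convex, so, unlike in the proof of Theorem \ref{GTH1.2}, we cannot conclude that $Du$ maps $\partial\Omega$ onto $\partial\Omega$. Instead I will use the boundary condition $|Du|\ge 1$ through the divergence structure, and then close the argument with the quermassintegral isoperimetric inequality (\ref{F2.9}) combined with the volume constraint $|\Omega|\le\omega_n$. The transport condition $Du(\Omega)=\Omega$ is used only at the very end, to fix the center of the ball.

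\textbf{Step 1: a lower bound for the volume.} Integrate the equation and use (\ref{F2}), the divergence theorem and (\ref{F7}). Since $u=0$ on $\partial\Omega$, the outer normal there is $\nu=Du/|Du|$; this is well defined because $|Du|\ge1$ on $\partial\Omega$, so $\partial\Omega$ is a regular level set and (\ref{F7}) applies there. We get
$$
\binom{n}{k}|\Omega|=\int_\Omega S_k(D^2u)=\frac1k\int_{\partial\Omega}S_k^{ij}u_iu_j=\frac1k\int_{\partial\Omega}H_{k-1}|Du|^{k+1}.
$$
Because $\Omega$ is convex, $H_{k-1}\ge0$ on $\partial\Omega$. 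Together with $|Du|\ge1$ and (\ref{F4}) this gives
$$
\binom{n}{k}|\Omega|\ge\frac1k\int_{\partial\Omega}H_{k-1}=\frac{n}{k}\binom{n-1}{k-1}W_k(\Omega)=\binom{n}{k}W_k(\Omega),
$$
that is, $W_k(\Omega)\le|\Omega|$.

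\textbf{Step 2: isoperimetric squeeze.} Set $t=|\Omega|/\omega_n$, so that $t\le1$ by hypothesis. Apply (\ref{F2.9}) with $l=n$ and $m=n-k$; this is admissible since $1\le n-k\le n-1$. Combined with Step 1 it yields
$$
t^{1/n}=\zeta_n(\Omega)\le\zeta_{n-k}(\Omega)=\Big(\frac{W_k(\Omega)}{\omega_n}\Big)^{\frac1{n-k}}\le t^{\frac1{n-k}}.
$$
Since $0<t\le1$ and $\frac1{n-k}>\frac1n$, we always have $t^{1/(n-k)}\le t^{1/n}$, with strict inequality when $t<1$. Hence $t=1$, and every inequality in the chain is an equality.

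Equality in (\ref{F2.9}) forces $\Omega$ to be a ball, and $t=1$ makes it a unit ball, say $\Omega=B(x_0,1)$. Equality in Step 1 then gives $\int_{\partial\Omega}H_{k-1}(|Du|^{k+1}-1)=0$. On the unit sphere $H_{k-1}=\binom{n-1}{k-1}>0$, so $|Du|\equiv1$ on $\partial\Omega$.

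\textbf{Step 3: identifying $u$.} On the ball $B(x_0,1)$, the function $w=\frac{|x-x_0|^2-1}{2}$ is a $k$-convex solution of $S_k(D^2w)=\binom nk$ with $w=0$ on the boundary. By the comparison principle for $k$-Hessian equations within the class of $k$-convex functions, this Dirichlet problem has a unique solution, so $u=w$. (Alternatively, one can feed $|Du|=1$ into the integral argument of Theorem \ref{TH3.1}, cases (1) and (2). I prefer the comparison route, because the Maclaurin step there needs more than $k$-convexity.) Finally, $Du(x)=x-x_0$ maps $\Omega=B(x_0,1)$ onto $B(0,1)$, and the condition $Du(\Omega)=\Omega$ forces $x_0=0$. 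This agrees with the stated conclusion up to translation.

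\textbf{Main obstacle.} The delicate point is Step 1. It needs $\partial\Omega$ to be a regular level set (guaranteed by $|Du|\ge1$) so that (\ref{F7}) identifies $S_k^{ij}u_iu_j$ with $H_{k-1}|Du|^{k+1}$, and it needs the sign $H_{k-1}\ge0$ from convexity in order to pass from $|Du|^{k+1}$ to $1$. Once $W_k(\Omega)\le|\Omega|$ is in hand, the reversed isoperimetric squeeze under $|\Omega|\le\omega_n$ is mechanical.
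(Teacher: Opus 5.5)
Your proposal is correct and follows essentially the paper's argument: the divergence identity with $|Du|\ge 1$ gives $W_k(\Omega)\le|\Omega|$, the quermassintegral inequality (\ref{F2.9}) with $l=n$, $m=n-k$ is squeezed against $|\Omega|\le\omega_n$, and uniqueness for the $k$-Hessian Dirichlet problem identifies $u$. Your direct comparison with $w$ is a slightly shorter finish than the paper's rotational-invariance-plus-ODE route, and using $Du(\Omega)=\Omega$ to force $x_0=0$ even removes the need for ``up to translation''. One harmless slip: since $\nu=Du/|Du|$, the boundary term is $\frac1k\int_{\partial\Omega}S_k^{ij}u_iu_j/|Du|=\frac1k\int_{\partial\Omega}H_{k-1}|Du|^{k}$, not $|Du|^{k+1}$; because $|Du|^k\ge1$ as well, both the inequality and the equality case go through unchanged.
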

\begin{proof}
 By (\ref{F2}), (\ref{F7}) and (\ref{EF3.8}), we have
\begin{equation}\label{F18}
|\Omega|=\int_{\Omega}\frac{S_{k}(D^{2}u)}{\binom{n}{k}}=\int_{\Omega}\frac{(S^{ij}_{k}(D^{2}u)u_{j})_{i}}{k\binom{n}{k}}=\int_{\partial\Omega}\frac{S^{ij}_{k}(D^{2}u)u_{j}u_{i}}{k\binom{n}{k}|Du|}=\int_{\partial\Omega}\frac{H_{k-1}|Du|^{k}}{k\binom{n}{k}}.
\end{equation}
The equality (\ref{F18}), in conjunction with the fact that $|Du|\geq{1}$ on $\partial\Omega$, yields
\begin{equation}\label{F19}
    \frac{1}{k\binom{n}{k}}\int_{\partial\Omega}H_{k-1}\leq |\Omega|.
\end{equation}
On the other hand, by (\ref{F2.9}), we obtain
$$
    \left(\frac{|\Omega|}{\omega_{n}}\right)^{\frac{1}{n}}\leq \left(\frac{W_{n-m}(\Omega)}{\omega_{n}}\right)^{\frac{1}{m}},
$$
for $1\le m \le n$.
Letting $n-m=k$, by \eqref{F4}, we obtain
\begin{equation}\label{F20}
    |\Omega|^{\frac{n-k}{n}}\leq\frac{1}{k\binom{n}{k}\omega_{n}^{\frac{k}{n}}}\int_{\partial\Omega}H_{k-1}.
\end{equation}
Combining (\ref{F19}), (\ref{F20}) and $|\Omega|\leq\omega_{n}$, we obtain
$$\frac{|\Omega|}{{\omega_{n}}^{\frac{k}{n}}}\leq|\Omega|^{\frac{n-k}{n}}\leq\frac{1}{k\binom{n}{k}\omega_{n}^{\frac{k}{n}}}\int_{\partial\Omega}H_{k-1}\leq \frac{|\Omega|}{{\omega_{n}}^{\frac{k}{n}}}.$$
It follows that all the above inequalities become equalities. Thus $|Du|\equiv{1}$ on $\partial\Omega$, and the isoperimetric inequality for quermassintegrals then implies that  $\Omega$ is a ball. Let its radius be $R$, then the volume is $|\Omega|=\omega_{n}R^{n}$, the surface area is $n\omega_{n}R^{n-1}$, and the principal curvatures are all $\frac{1}{R}$. The equality in (\ref{F19}) holds, which implies that  
$$ \frac{1}{k\binom{n}{k}}\int_{\partial\Omega}H_{k-1}=\frac{1}{k\binom{n}{k}}\binom{n-1}{k-1}\left(\frac{1}{R}\right)^{k-1}\cdot n\omega_{n}R^{n-1}=|\Omega|=\omega_{n}R^{n},$$
simplifying, we obtain $\omega_{n}R^{n-k}=\omega_{n}R^{n}$. Hence, we obtain $R = 1$ and $\Omega$ is the unit ball. Up to a translation, we may assume that
$\Omega=B$. We next show that $u$ is radially symmetric. For any orthogonal
matrix $Q$, define
$$
u_Q(x):=u(Qx).
$$
Since $B$ is invariant under orthogonal transformations, $u_Q$
is a $k$-convex solution of
$$
S_k(D^2u_Q)=\binom{n}{k}\quad\text{in }B,
\quad
u_Q=0\quad\text{on }\partial B.
$$
By the uniqueness of $k$-admissible solutions to the $k$-Hessian Dirichlet problem, we have
$u_Q(x)=u(x)$. Hence $u(Qx)=u(x)$ for every orthogonal
matrix $Q$, and therefore
$u$ is radially symmetric. Thus
$$
u(x)=f(r),\quad r=|x|.
$$
For a radial function, the eigenvalues of $D^2u$ are
$f''(r)$ and $f'(r)/r$ with multiplicity $n-1$. Consequently,
$$
S_k(D^2u)
=
\binom{n-1}{k-1}f''(r)
\left(\frac{f'(r)}{r}\right)^{k-1}
+
\binom{n-1}{k}
\left(\frac{f'(r)}{r}\right)^k.
$$
Setting $w(r)=f'(r)$ and using
$$
\binom{n-1}{k}
=\frac{n-k}{k}\binom{n-1}{k-1},
$$
the equation $S_k(D^2u)=\binom{n}{k}$ becomes
$$
\frac{d}{dr}\left(r^{n-k}w(r)^k\right)=nr^{n-1}.
$$
Hence
$$
r^{n-k}w(r)^k=r^n+C.
$$
Since $k\le n-1$ and $u\in C^2(\overline{B})$, we must have
$C=0$. Therefore
$f'(r)=r$, and hence $
f(r)=\frac{r^2}{2}+C_0$.
The boundary condition $f(1)=0$ gives $C_0=-1/2$. Thus
$$
u(x)=\frac{|x|^2-1}{2}.
$$
This completes the proof.
\end{proof}
\begin{remark}
We note that the transport condition $Du(\Omega)=\Omega$ is not directly
used in the proof of Theorem \ref{T5.1}. In fact, the argument shows that
the remaining assumptions already imply that, up to translation,
$\Omega$ is the unit ball and
$u(x)=\frac{|x-x_0|^2-1}{2}$
for some $x_0\in\mathbb{R}^n$. We retain the condition
$Du(\Omega)=\Omega$ in the statement to emphasize the particular
optimal-transport setting considered in this theorem.
\end{remark}
\begin{theorem}\label{T5.2}
($Du(\Omega)=\Omega^{*}$ case) Let $\Omega$ be a $C^2$ bounded convex domain, $|\Omega|\leq\omega_{n}$ and let $\Omega^*\subset\mathbb{R}^n$ be a
bounded domain. If $u\in C^2(\overline{\Omega})$ a $k$-convex solution of
 \begin{equation}\label{EF3.9}
        \left\{
\begin{array}{ll}
 S_{k}({D^{2}u})=\binom{n}{k}&\text{in}~\Omega,\\
 u=0&\text{on}~\partial{\Omega},\\
|Du|\geq{1}&\text{on}~\partial{\Omega},\\
Du(\Omega)=\Omega^{*},
\end{array}
\right.
\end{equation}
with $k\in\{1,\cdots,n-1\}$, $\omega_{n}$ is the volume of the $n$-dimensional unit ball. Then, for some $x_0\in\mathbb{R}^n$, $u(x)=\dfrac{|x-x_0|^2-1}{2}$ and $\Omega$ is the unit ball centered at $x_0$, $\Omega^*$ is the unit ball centered at the origin.
\end{theorem}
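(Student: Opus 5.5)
The plan is to follow the proof of Theorem \ref{T5.1} essentially verbatim, since, as the Remark after it observes, the transport condition was never used there. Only at the end do we read off $\Omega^*$ from the explicit solution.

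First, by \eqref{F2}, \eqref{F7} and the divergence theorem (using $u=0$ on $\partial\Omega$, so $\nu_x=Du/|Du|$), we get the analogue of \eqref{F18}:
$$|\Omega|=\frac{1}{k\binom{n}{k}}\int_{\partial\Omega}H_{k-1}|Du|^{k}.$$
Since $|Du|\ge1$ on $\partial\Omega$, this gives $\frac{1}{k\binom{n}{k}}\int_{\partial\Omega}H_{k-1}\le|\Omega|$. Next, the quermassintegral inequality \eqref{F2.9} with $m=n-k$, $l=n$ and \eqref{F4} gives \eqref{F20}. Combining it with $|\Omega|\le\omega_n$ yields
$$\frac{|\Omega|}{\omega_n^{k/n}}\le|\Omega|^{\frac{n-k}{n}}\le \frac{1}{k\binom{n}{k}\omega_n^{k/n}}\int_{\partial\Omega}H_{k-1}\le\frac{|\Omega|}{\omega_n^{k/n}}.$$
So all of these are equalities. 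This forces three things: $|Du|\equiv1$ on $\partial\Omega$ (here we need $H_{k-1}>0$ on $\partial\Omega$, which holds for a $C^2$ convex domain except possibly on a small set, or one can argue via continuity); $\Omega$ is a ball, by the equality case of \eqref{F2.9}; and $|\Omega|=\omega_n$. The radius is therefore $1$, and $\Omega=B(x_0,1)$ for some $x_0$.

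Then, exactly as in Theorem \ref{T5.1}, we use rotational invariance about $x_0$ together with uniqueness of $k$-admissible solutions of the Dirichlet problem to conclude that $u$ is radial about $x_0$. The radial ODE $\frac{d}{dr}(r^{n-k}w^k)=nr^{n-1}$, with regularity at $r=0$ forcing $C=0$, gives $u(x)=\frac{|x-x_0|^2-1}{2}$. Hence $Du(x)=x-x_0$, and so $\Omega^*=Du(\Omega)=B(x_0,1)-x_0=B$, the unit ball centered at the origin.

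The only delicate points are the equality case, where we must deduce $|Du|\equiv1$ from $\int_{\partial\Omega}H_{k-1}(|Du|^k-1)=0$, and the uniqueness step. For the former, once $\Omega$ is known to be a ball, $H_{k-1}=\binom{n-1}{k-1}$ is a positive constant, and the conclusion follows directly. For the latter, we invoke the standard comparison principle for $k$-Hessian equations. Alternatively, we could avoid uniqueness altogether: with $|Du|=1$ on $\partial\Omega$, the hypotheses of Theorem \ref{TH3.1} case (2) are essentially met, since its proof only used $|Du|=1$ on the boundary and not $Du(\Omega)=B$. One caveat is that Theorem \ref{TH3.1} assumes strict convexity, whereas here $u$ is only $k$-convex, so we keep the radial ODE route as the primary argument.
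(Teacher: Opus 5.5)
Your proposal is correct and follows the paper's proof essentially verbatim. You rerun the chain \eqref{F18}--\eqref{F20} from Theorem \ref{T5.1}, which never uses the transport condition, to get that $\Omega$ is a unit ball; then rotational invariance, uniqueness for the $k$-Hessian Dirichlet problem and the radial ODE give $u=\frac{|x-x_0|^2-1}{2}$, and only at the end do you read off $\Omega^*=Du(\Omega)=B$. Your derivation of $|Du|\equiv 1$ from the constancy of $H_{k-1}$ on the ball is slightly more careful than the paper's. The parenthetical claim that $H_{k-1}>0$ off a small set is not true in general for $C^2$ convex domains, but you never rely on it.
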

\begin{proof}
The argument leading to \eqref{F19}--\eqref{F20} in the proof of
Theorem \ref{T5.1} does not use the condition $Du(\Omega)=\Omega$.
Hence, under the present assumptions, the same argument shows that
$\Omega$ is a unit ball. By translating the coordinates, we may assume that $x_0=0$, so that $\Omega=B$.
The remaining argument in the proof of Theorem \ref{T5.1}, which
uses the rotational invariance of the $k$-Hessian equation and the
uniqueness of the $k$-Hessian Dirichlet problem, shows that
$$
u(x)=\frac{|x|^2-1}{2}.
$$
Undoing the translation, we obtain
$$
u(x)=\frac{|x-x_0|^2-1}{2},
$$
and $\Omega$ is the unit ball centered at $x_0$.
Consequently,
$$
Du(x)=x-x_0
$$
and hence
$$
Du(\Omega)=B.
$$
Since $Du(\Omega)=\Omega^*$ by assumption, it follows that
$$
\Omega^*=B.
$$
This completes the proof.
\end{proof}
\begin{remark}
The argument first yields
$\Omega$ is the unit ball at $x_0$ and $u(x)=\frac{|x-x_0|^2-1}{2}$
for some $x_0\in\mathbb{R}^n$. The transport condition
$Du(\Omega)=\Omega^*$ is then used only to conclude that
$\Omega^*=Du(\Omega)=B$. Thus, we retain this condition in the statement to emphasize the
optimal-transport setting under consideration.
\end{remark}
\vspace{3mm}

\section{Overdetermined problem with a nonstandard boundary condition}\label{sec5}
In this section, we solve a new overdetermined problem with a nonstandard boundary condition, and prove Theorem \ref{TH2} and \ref{TH2-new}.

In order to prove Theorem \ref{TH2} and \ref{TH2-new}, we need the following lemmas.
\begin{lemma}\cite{bib10}\label{LA.1}
Let $\Omega \subset \mathbb{R}^n$ be a bounded convex domain, $n \geq 2$, and $u$ be the strictly convex solution to the problem
     $$ \left\{
\begin{array}{ll}
 \det{D^{2}u}=1&\text{in}~\Omega,\\
u=0&\text{on}~\partial\Omega.
\end{array}
\right.$$
Then the function
$$
\phi(x) = \sum_{k,l=1}^n \frac{\partial S_2(D^2 u)}{\partial u_{kl}} u_k u_l - 2(n-1)u
$$
attains its maximum on the boundary $\partial \Omega$. Moreover, $\phi$ attains its maximum in $\Omega$ if and only if $\Omega$ is an ellipse for $n=2$ or a ball for $n\geq 3$.
\end{lemma}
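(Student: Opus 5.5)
We prove Lemma \ref{LA.1} with a maximum-principle argument for the $P$-function $\phi$ in the style of Weinberger. Write $a^{ij}=S_2^{ij}(D^2u)=\sigma_1\delta_{ij}-u_{ij}$ with $\sigma_1=\Delta u$, so that $\phi=\sigma_1|Du|^2-u_{ij}u_iu_j-2(n-1)u$. The natural elliptic operator is the linearized Monge--Amp\`ere operator $L=u^{ij}\partial_{ij}$, where $(u^{ij})=(D^2u)^{-1}$; it is elliptic because $u$ is strictly convex.

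\textbf{Step 1: the differential inequality.} Fix an interior point $x$. Rotate coordinates so that $D^2u(x)$ is diagonal with eigenvalues $\lambda_1,\dots,\lambda_n$, where $\prod\lambda_i=1$. Differentiating $\log\det D^2u=0$ gives
$$u^{ij}u_{ijk}=0,\qquad u^{ij}u_{ijkl}=u^{ip}u^{jq}u_{pqk}u_{ijl}.$$
These identities remove every fourth-order term from $L\phi$ and leave a quadratic form in the third derivatives $u_{ijk}$ plus lower-order terms. We then compute $\phi_i$ and $L\phi$. Where $D\phi(x)$ and $Du(x)$ both vanish, we check $L\phi\ge0$ directly. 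Otherwise we use $\phi_i=0$ to eliminate the mixed terms $u_{iik}u_k$. The goal is
$$L\phi+b^k\phi_k\ge 0,$$
with $b^k$ locally bounded away from the critical points of $u$. Since $u$ is strictly convex, it has a single critical point, the minimum point, and there $\phi$ can be treated directly. The $-2(n-1)u$ term contributes $-2(n-1)u^{ij}u_{ij}=-2n(n-1)$. This must be absorbed by the positive terms $2a^{kl}u^{ij}u_{ik}u_{jl}=2\sum_k(\sigma_1-\lambda_k)\lambda_k$. By Maclaurin's inequality (\ref{e7}), $\sum_{k}(\sigma_1-\lambda_k)\lambda_k=2S_2\ge n(n-1)(\det D^2u)^{2/n}=n(n-1)$.

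The third-derivative quadratic terms must then be shown nonnegative after substituting the gradient condition. This is the computation of \cite{bib10}, and it is the main obstacle. I would carry it out by Cauchy--Schwarz in the direction $Du/|Du|$, choosing coordinates with $Du(x)=|Du|e_1$.

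\textbf{Step 2: maximum on the boundary.} With the inequality of Step 1, the strong maximum principle shows that $\phi$ attains its maximum on $\partial\Omega$. Moreover, if $\phi$ attains an interior maximum, then $\phi$ is constant.

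\textbf{Step 3: the rigidity case.} If $\phi$ is constant, every inequality in Step 1 is an equality. In particular equality holds in Maclaurin's inequality, so $D^2u=I$ when $n\ge3$. Then $u=\frac{|x-x_0|^2-r^2}{2}$ and $\Omega$ is a ball.

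For $n=2$, $\phi$ is invariant under unimodular affine maps. Moreover, $S_2^{ij}u_iu_j=\mathrm{cof}(D^2u)Du\cdot Du$, and $\phi=\mathrm{const}$ forces $D^3u\equiv0$. Hence $u$ is a quadratic polynomial, and its zero level set $\partial\Omega$ is an ellipse.

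For the converse, a quadratic $u$ vanishing on the boundary of an ellipse (respectively, of a ball when $n\ge3$, where $\det D^2u=1$ together with Step 3 forces $D^2u=I$) makes $\phi$ constant. Indeed, if $D^2u=A$ is constant, then $\phi=\mathrm{tr}(A)|Du|^2-ADu\cdot Du-2(n-1)u$. Using $Du=A(x-x_0)$, this is constant exactly when $A$ is a multiple of $I$ for $n\ge3$, and it is constant for every $A$ when $n=2$. Thus $\phi$ attains its maximum in $\Omega$.
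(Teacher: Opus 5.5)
Your strategy is the one the paper uses: the proof of Lemma \ref{L1} with $f\equiv1$ is exactly the argument behind Lemma \ref{LA.1}. However, the decisive step is missing. You defer the nonnegativity of the third-order terms to \cite{bib10} as ``the main obstacle'', and the route you sketch for it is unnecessary and, as written, incomplete. The mixed terms you want to remove with $\phi_i=0$ are already zero. At a point where $D^2u$ is diagonal, $\sum_i u^{ii}\phi_{ii}$ contains two cross terms. The first is $4\sum_{i,l}\partial_iS_2^{il}\,u_l$, which vanishes by the divergence-free identity \eqref{F1}. The second is $2\sum_kS_2^{kk}u_k\sum_iu^{ii}u_{iik}$, which vanishes by $u^{ij}u_{ijk}=0$. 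Applying $u^{ii}u_{iikl}=u^{ii}u^{jj}u_{ijk}u_{ijl}$ to $\partial_{ii}S_2^{kl}=\delta_{kl}\partial_{ii}\Delta u-u_{klii}$ then gives the exact identity
\begin{equation*}
\sum_iu^{ii}\phi_{ii}=\sum_{i,j}u^{ii}u^{jj}\Big(|Du_{ij}|^2|Du|^2-(Du_{ij}\cdot Du)^2\Big)+4S_2(D^2u)-2n(n-1),
\end{equation*}
where $Du_{ij}=(u_{ij1},\dots,u_{ijn})$. Hence $L\phi\ge0$ at every point of $\Omega$, including the critical point $x^*$, by Cauchy--Schwarz and \eqref{e7}.

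This matters because your version does not close. There, $b^k$ is bounded only away from $x^*$, and $x^*$ is ``treated directly''. But checking $L\phi(x^*)\ge0$ at a single point does not rule out a strict maximum at $x^*$. For instance, $g=-|x|^4$ satisfies $\Delta g+b\cdot Dg=0$ for $x\neq0$ with $b=-(n+2)x/|x|^2$, and $\Delta g(0)=0$, yet $g$ has a strict maximum at $0$. The strong maximum principle needs locally bounded coefficients near $x^*$.

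You need the same identity in Step 3 for $n=2$, where you only assert that $\phi$ constant forces $D^3u\equiv0$. When $n=2$ we have $S_2=\det D^2u$, so the Maclaurin term vanishes identically and gives no information. All the rigidity must come from equality in Cauchy--Schwarz, i.e.\ $u_{ij1}u_2=u_{ij2}u_1$ for all $i,j$ wherever $Du\neq0$. Combine this with $u^{11}u_{11k}+u^{22}u_{22k}=0$ in diagonal coordinates. This yields $u_{111}=u_{112}=u_{122}=u_{222}=0$ on $\Omega\setminus\{x^*\}$, hence $D^3u\equiv0$ by continuity. The unimodular affine invariance of $\phi$ does not replace this computation. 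Your $n\ge3$ rigidity via equality in Maclaurin is fine once the identity above is established. For the converse, you must invoke uniqueness of the convex solution of the Dirichlet problem to identify $u$ with the quadratic polynomial; appealing to Step 3 there is circular.
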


With Lemma \ref{LA.1} in hand, we are ready to present the proof of Theorem \ref{TH2}.
\begin{proof}[Proof of Theorem \ref{TH2}]
Let us introduce the following auxiliary function
$$\phi(x)=H_{1}|Du|^{3}-2(n-1)u.$$
By (\ref{F7}),~we have
$$\phi(x)=\sum_{i,j=1}^{n}\frac{\partial{S_{2}(D^{2}u)}}{\partial{u_{ij}}}u_{i}u_{j}-2(n-1)u.$$
By Lemma \ref{LA.1}, $\phi$ attains its maximum on the boundary $\partial\Omega$.~Since $\phi=c$ on $\partial\Omega$, we have $\phi\leq c$ in
$\Omega$. By the strong maximum principle, either\\
(i)~$\phi<c$~in~$\Omega$; \\
or
(ii)~$\phi\equiv{c}$~in~$\overline\Omega$.\\
Suppose by contradiction that $\phi$ satisfies (i),~that is
$$H_{1}|Du|^{3}-2(n-1)u<c~~~\text{in}~\Omega.$$
Integrating both sides on $\Omega$,~we obtain
\begin{equation}\label{e16}
\int_{\Omega}{(H_{1}|Du|^{3}-2(n-1)u)}<c|\Omega|.
\end{equation}
 Using $u=0$ on $\partial\Omega$, (\ref{F7}) and (\ref{F2}), we have
\begin{equation}\label{e17}
\int_{\Omega}{H_{1}|Du|^{3}}=\int_{\Omega}{S^{ij}_{2}(D^{2}u)u_{i}u_{j}}=-2\int_{\Omega}{uS_{2}(D^{2}u)},
\end{equation}
Substituting (\ref{e17}) into (\ref{e16}), we obtain
\begin{equation}\label{e18}
    \int_{\Omega}{(H_{1}|Du|^{3}-2(n-1)u)}=-\int_{\Omega}{(2S_{2}(D^{2}u)+2(n-1))u}<c|\Omega|.
\end{equation}
By (\ref{e7}),~we have
$$S_{2}(D^{2}u)\geq\frac{n(n-1)}{2}.$$
Therefore, we have
$$-(2S_{2}(D^{2}u)+2(n-1))u\ge{-(n-1)(n+2)u}.$$
Integrating both sides over $\Omega$,~by (\ref{1.10}), we obtain 
$$-\int_{\Omega}{(2S_{2}(D^{2}u)+2(n-1))u}\ge \int_{\Omega}{-(n-1)(n+2)u}\ge c|\Omega|.$$
This contradicts (\ref{e18}).~Therefore,~it follows that only case (ii) ($\phi\equiv c$ in $\overline \Omega$) holds. By Lemma \ref{LA.1}, which implies that $\Omega$ is an ellipse when $n=2$ and a ball when $n\geq3$.
Moreover, $u$ is elliptically symmetric when $n=2$ and spherically
symmetric when $n\geq3$. .
\end{proof}

Note that the ellipsoidal or spherical symmetry of the solution is not explicitly stated in the Lemma \ref{LA.1}, but it is actually obtained in its proof (see \cite{bib10}). For the derivation details, one may refer to the proof of the following Lemma \ref{L1}. Note that Lemma \ref{L1} provides a new maximum principle for the $P$-function \eqref{P-f} for general $f$ and $\tau=2$.
{\begin{lemma}\label{L1}
    Let $\Omega\subset\mathbb{R}^n$ be a bounded convex domain with
$C^2$ boundary. Assume that $f\in C^2(\mathbb{R})$ is positive and
nondecreasing, and let
$u\in C^4(\Omega)\cap C^2(\overline{\Omega})$ be a convex solution of
$$
\left\{
\begin{array}{ll}
\det D^2u=f(u) & \text{in }\Omega,\\
u=0 & \text{on }\partial\Omega,
\end{array}
\right.
$$
and assume that
$
\frac{\lambda_{\max}(x)}{\lambda_{\min}(x)}
\leq
\left(\frac{3n}{4}\right)^{\frac{n}{2(n-1)}}$ for all $x\in\Omega$, where $\lambda_{\min}(x)$ and $\lambda_{\max}(x)$ denote the smallest
and largest eigenvalues of $D^2u(x)$, respectively. Then the function
    $$\phi(x)=\sum_{k,l=1}^{n}{\frac{\partial{S_{2}(D^2{u})}}{\partial{u_{kl}}}u_{k}u_{l}}-2(n-1)\int_{0}^{u}{f^{\frac{2}{n}}(t)dt}$$
    attains its maximum on $\partial\Omega$. Moreover, $\phi$ attains its maximum in $\Omega$ then $f$ is constant on $u(\Omega)$ and $\Omega$ is an ellipse for $n=2$ or a ball for $n\geq 3$.
\end{lemma}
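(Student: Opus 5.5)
The plan is the standard $P$-function route: show that $\phi$ is a subsolution, $L\phi\ge 0$ modulo terms involving $D\phi$, for a suitable elliptic operator $L$, and then apply the Hopf maximum principle.

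\textbf{Choice of operator.} The natural choice is the linearized Monge--Amp\`ere operator
$$L=u^{ij}\partial_{ij}, \qquad (u^{ij})=(D^2u)^{-1}.$$
This operator is uniformly elliptic on compact subsets of $\Omega$ because $D^2u>0$.

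\textbf{Step 1 (coordinates and reduction).} Fix an interior point $x$ and rotate so that $D^2u(x)=\mathrm{diag}(\lambda_1,\dots,\lambda_n)$. Write
$$\phi=\sum_{k,l}S_2^{kl}u_ku_l-2(n-1)G(u), \qquad G(s)=\int_0^s f^{2/n}(t)\,dt.$$
We have $S_2^{kl}=S_1\delta_{kl}-u_{kl}$, so
$$\sum_{k,l}S_2^{kl}u_ku_l=\Delta u\,|Du|^2-u_{kl}u_ku_l.$$
Compute $\phi_i$ and $\phi_{ij}$. Then use the equation differentiated once and twice,
$$u^{ij}u_{ijk}=\frac{f'(u)}{f(u)}\,u_k,$$
together with the corresponding second-derivative identity, where $u^{ij}u_{ijkl}$ equals a term quadratic in third derivatives plus $(\log f)''u_ku_l+(\log f)'u_{kl}$.

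\textbf{Step 2 (eliminating third derivatives).} Use $\phi_i=0$ at the point, or more generally keep terms in $D\phi$ as first-order terms of the operator, to eliminate third derivatives of the type $u_{kki}$ appearing linearly. What remains is:
\begin{itemize}
\item a quadratic form in the third derivatives $u_{ijk}$, with coefficients built from $\lambda_i^{-1}$ and $u_k$;
\item zero-order terms such as $2S_1\sum_i u_i^2/\lambda_i\cdot(\ldots)$ and $n$-type terms coming from $u^{ij}u_{ij}=n$;
\item the contribution $-2(n-1)\bigl(f^{2/n}u^{ij}u_{ij}+(f^{2/n})'u^{ij}u_iu_j\bigr)$.
\end{itemize}
Since $f'\ge 0$, the terms carrying $f'(u)$ combine to a sign-favorable form. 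The $(f^{2/n})'$ term, for example, is $\le 0$ on the negative side; with $f$ nondecreasing and the other $f'$ terms coming from differentiating $\Delta u$, one should be able to group them as
$$f'(u)\,\bigl(\text{nonnegative quadratic in }Du\bigr).$$
The zero-order non-$f'$ terms reduce, as in \cite{bib10} for $f=1$, to comparing $\sum_i\lambda_i$ and $\sum_i\lambda_i^{-1}$ with $f^{2/n}=(\prod_i\lambda_i)^{2/n}$. The aim is to show
$$S_1\,\sigma\ge (n-1)\,n\,f^{2/n}\ldots$$
type inequalities via Maclaurin's inequality \eqref{e7}.

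\textbf{Step 3 (main obstacle).} The hardest part is the leftover quadratic form in the third derivatives. Unlike the case $f=1$, the extra terms $(f'/f)u_k$ in $u^{ij}u_{ijk}$ spoil the exact cancellations of \cite{bib10}. I would use Cauchy--Schwarz to absorb the cross terms, which produces error terms of the form
$$\frac{\lambda_{\max}}{\lambda_{\min}}\times(\ldots).$$
The pinching hypothesis
$$\frac{\lambda_{\max}}{\lambda_{\min}}\le\left(\frac{3n}{4}\right)^{n/(2(n-1))}$$
is exactly what should make the resulting coefficient nonnegative. Expressed via $\lambda_{\min}^{\,n}\le f\le\lambda_{\max}^{\,n}$, this bound gives $\lambda_i^{2}\le \tfrac{3n}{4}f^{2/n}$-type control on each eigenvalue, and I would verify that it ensures $L\phi+b^i\phi_i\ge 0$ with locally bounded $b^i$. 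Concretely, I would first test the argument on the extremal configuration where one eigenvalue equals $\lambda_{\max}$ and all the others equal $\lambda_{\min}$, to confirm that the constant $3n/4$ is the threshold.

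\textbf{Step 4 (conclusion).} With $L\phi+b^i\phi_i\ge 0$, Hopf's maximum principle gives that $\max\phi$ is attained on $\partial\Omega$, and that if it is attained inside then $\phi$ is constant. If $\phi$ is constant, every inequality used in Steps 2--3 is an equality. In particular:
\begin{itemize}
\item the $f'$ terms vanish, so $f'(u)|Du|^2=0$; since $Du\neq0$ off the single minimum point, $f$ is constant on $u(\Omega)$;
\item the third-derivative quadratic form vanishes, so $D^3u\equiv0$ and $u$ is quadratic;
\item the Maclaurin equality forces all eigenvalues to be equal when $n\ge3$.
\end{itemize}
Because $u$ is quadratic and vanishes on $\partial\Omega$, the domain $\Omega$ is an ellipse (when $n=2$) or a ball (when $n\ge3$), and $u$ has the corresponding elliptic or spherical symmetry.
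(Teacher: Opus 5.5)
There is a genuine gap. The inequality $u^{ij}\phi_{ij}\ge0$, which is the whole content of the lemma, is never established: you only say the $f'$ terms ``should'' group favorably and that you ``would verify'' the effect of the pinching. Worse, the mechanism you sketch puts the hypothesis in the wrong place.

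Work at a point where $D^2u=\operatorname{diag}(\lambda_1,\dots,\lambda_n)$ and write $b^{kl}=\partial S_2/\partial u_{kl}$. The terms linear in third derivatives vanish identically, with no use of $\phi_i=0$:
\begin{itemize}
\item $4\sum_{i,l}b^{il}_iu_l=0$, because $\sum_i\partial_ib^{il}=0$;
\item $2\sum_{i,k}u^{ii}b^{kk}u_{kii}u_k=2\frac{f'}{f}\sum_kb^{kk}u_k^2$, by $u^{ij}u_{ijk}=f'u_k/f$.
\end{itemize}
The fourth-order term $\sum_iu^{ii}b^{kl}_{ii}u_ku_l$, rewritten with the twice-differentiated equation, becomes $\sum_{i,j}u^{ii}u^{jj}\bigl(|a_{ij}|^2|Du|^2-(a_{ij}\cdot Du)^2\bigr)$ with $a_{ij}=(u_{ij1},\dots,u_{ijn})$. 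This is nonnegative by Cauchy--Schwarz with no hypothesis at all. The $f''$ and $(f')^2$ contributions cancel exactly between the two $k\neq l$ sums, leaving only one more copy of $\frac{f'}{f}\sum_kb^{kk}u_k^2$. So the ``spoiled cancellations'' you call the main obstacle do not occur, and the pinching has no role in the third-order form.

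The pinching is needed for the gradient terms, which you claim are sign-favorable from $f'\ge0$ alone. After the reductions above, $u^{ii}\phi_{ii}$ is the Cauchy--Schwarz form, plus $4S_2-2n(n-1)f^{2/n}\ge0$ (Maclaurin), plus
\[
\frac{f'(u)}{f(u)}\sum_{k=1}^n\Bigl(3(S_1-\lambda_k)-\frac{4(n-1)}{n}\,\frac{f^{2/n}}{\lambda_k}\Bigr)u_k^2 .
\]
For $n\ge3$ the bracket can be negative. With $n=3$, $\lambda=(\varepsilon,M,M)$ and $k=1$, it equals $6M-\frac83\varepsilon^{-1/3}M^{4/3}$, which is negative once $M/\varepsilon>(9/4)^3$. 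So monotonicity of $f$ cannot make these terms favorable.

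The paper closes this step with $S_1-\lambda_k\ge(n-1)\lambda_{\min}$ and $f^{2/n}/\lambda_k\le\lambda_{\max}^{2(n-1)/n}\lambda_{\min}^{-(n-2)/n}$. This is exactly where $\lambda_{\max}/\lambda_{\min}\le(3n/4)^{n/(2(n-1))}$ enters. Equivalently, it suffices that $\lambda_{\min}^2\ge\frac{4}{3n}f^{2/n}$, which the pinching implies via $f\le\lambda_{\min}\lambda_{\max}^{n-1}$.

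Your rigidity step also has a gap for $n=2$. There the Maclaurin term is identically zero and the gradient term reduces to $f'\sum_iu^{ii}u_i^2$, which gives $f'\equiv0$. Vanishing of the Cauchy--Schwarz form only says $a_{ij}\parallel Du$. To conclude $D^3u\equiv0$ you must combine this with $f'\equiv0$ and the differentiated equation $u^{ij}u_{ijk}=0$, which is what the paper does.
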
}
\begin{proof}
Let $D^{2}u=(u_{ij})$,~$(u^{ij})=(u_{ij})^{-1}$ and $\frac{\partial{S_{2}(D^{2}u)}}{\partial{u_{kl}}}=b^{kl}$,~then
    $$\phi(x)=\sum_{k,l=1}^{n}{b^{kl}u_{k}u_{l}}-2(n-1)\int_{0}^{u}{f^{\frac{2}{n}}(t)dt}.$$
We will prove the following differential inequality
$$\sum_{i,j=1}^{n}u^{ij}\phi_{ij}\geq{0}~~\text{in}~\Omega.$$
Fix an arbitrary point $x_0\in\Omega$ and choose an orthonormal
coordinate system such that
$$
D^2u(x_0)=\operatorname{diag}(\lambda_1,\ldots,\lambda_n),
$$
where $\lambda_i=u_{ii}(x_0)$ are the eigenvalues of $D^2u(x_0)$. All the calculations below are understood at $x_0$ in this coordinate
system. Let $b^{kl}_{i}=\frac{\partial{b^{kl}}}{\partial{x_{i}}}$,~$b^{kl}_{ii}=\frac{\partial{b^{kl}}}{\partial{x_{i}}\partial{x_{i}}}$.~ By direct computations, we have
$$\begin{aligned}
    \phi_{i}=(\sum_{k,l=1}^{n}{b^{kl}u_{k}u_{l}})_{i}-2(n-1)(\int_{0}^{u}{f^{\frac{2}{n}}(t)dt})_{i}
    =\sum_{k,l=1}^{n}(b^{kl}_{i}u_{k}u_{l}+2b^{ii}u_{ii}u_{i})-2(n-1)f^{\frac{2}{n}}(u)u_{i}
\end{aligned}$$
at $x_0$, and
$$\begin{aligned}
    \phi_{ii}=&\sum_{k,l=1}^{n}(b^{kl}_{i}u_{k}u_{l}+2b^{kl}u_{ki}u_{l})_{i}-2(n-1)(f^{\frac{2}{n}}(u)u_{i})_{i}\\
    =&\sum_{k,l=1}^{n}(b^{kl}_{ii}u_{k}u_{l}+4b^{il}_{i}u_{ii}u_{l}+2{b^{kk}}u_{kii}u_{k})+2b^{ii}u_{ii}^{2}-\frac{4(n-1)}{n}f^{\frac{2-n}{n}}(u)f'(u)u_{i}^{2}-2(n-1)f^{\frac{2}{n}}u_{ii},
    \end{aligned}$$
at $x_0$. Therefore at $x_0$,~we have
$$\begin{aligned}
\sum_{i,j=1}^{n}u^{ij}\phi_{ij}=\sum_{i=1}u^{ii}\phi_{ii}=&\sum_{i,k,l=1}^{n}u^{ii}b^{kl}_{ii}u_{k}u_{l}+4\sum_{i,l=1}^{n}b^{il}_{i}u_{l}+2\sum_{i,k=1}^{n}u^{ii}b^{kk}u_{kii}u_{k}+2\sum_{i=1}^{n}b^{ii}u_{ii}\\
    &-\sum_{i=1}^{n}\frac{4(n-1)}{n}f^{\frac{2-n}{n}}(u)f'(u)u^{ii}u^{2}_{i}-2n(n-1)f^{\frac{2}{n}}(u),
    \end{aligned}$$
    where the identity $u_{ii}(x_0)u^{ii}(x_0) = 1$ is used. Differentiating $\log\det(D^{2}u)=\log f(u)$,~we obtain
\begin{equation}\label{e6}
\sum_{i,j=1}^{n}u^{ij}u_{ijk}=\frac{f'(u)u_{k}}{f(u)}.
\end{equation}
Differentiating once again,~we obtain
\begin{equation}\label{e3}\sum_{i=1}^{n}u^{ii}u_{iikl}=\sum_{i,j=1}^{n}u^{ii}u^{jj}u_{ijk}u_{ijl}+\frac{f(u)(f''(u)u_{k}u_{l}+f'(u)u_{kl})-f'^{2}(u)u_{k}u_{l}}{f^{2}(u)},
\end{equation}
at $x_0$. Since $\sum_{i}^{n}b^{il}_{i}=0$,~$\sum_{i}^{n}b^{ii}u_{ii}=2S_{2}(D^{2}u)$ and (\ref{e6}),~we have
\begin{equation}\label{e5}
\begin{aligned}
\sum_{i=1}^{n}u^{ii}\phi_{ii}=&\sum_{i,k,l=1}^{n}u^{ii}b^{kl}_{ii}u_{k}u_{l}+2\sum_{k=1}^{n}b^{kk}\frac{f'(u)u^{2}_{k}}{f(u)}+4S_{2}(D^{2}u)-2n(n-1)f^{\frac{2}{n}}(u)\\
    &-\frac{4(n-1)}{n}\sum_{i=1}^{n}f^{\frac{2-n}{n}}(u)f'(u)u^{ii}u^{2}_{i}.
\end{aligned}
\end{equation}
Since $b^{kl}=\sum_{j\neq{k}}u_{jj}$ for $k=l$ and $b^{kl}=-u_{kl}$ for $k\neq{l}$, we have $ b^{kl}_{ii}=\sum_{j\neq{k}}u_{jjii}$ for $k=l$ and 
$b^{kl}_{ii}=-u_{klii}$ for $k\neq{l}$. Using (\ref{e3}), we obtain
\begin{equation}\label{e4}
    \begin{aligned}
       \sum_{i,k,l=1}^{n}u^{ii}b^{kl}_{ii}u_{k}u_{l}=&\sum_{\substack{i,j,l=1\\j\neq{l}}}^{n}u^{ii}u_{jjii}u^{2}_{l}-\sum_{\substack{i,k,l=1\\k\neq{l}}}^{n}u^{ii}u_{klii}u_{k}u_{l}\\=&\sum_{\substack{i,j,k,l=1\\k\neq{l}}}^{n}u^{ii}u^{jj}u^{2}_{ijk}u^{2}_{l}-\sum_{\substack{i,j,k,l=1\\k\neq{l}}}^{n}u^{ii}u^{jj}u_{ijk}u_{ijl}u_{k}u_{l}\\&+\sum_{\substack{k,l=1\\k\neq{l}}}^{n}\frac{f(u)(f''(u)u^{2}_{k}+f'(u)u_{kk})-f'^{2}(u)u^{2}_{k}}{f^{2}(u)}u^{2}_{l}\\&-\sum_{\substack{k,l=1\\k\neq{l}}}^{n}\frac{f(u)(f''(u)u_{k}u_{l}+f'(u)u_{kl})-f'^{2}(u)u_{k}u_{l}}{f^{2}(u)}u_{k}u_{l}\\
       =&\sum_{\substack{i,j,k,l=1\\k\neq{l}}}^{n}(u^{ii}u^{jj}u^{2}_{ijk}u^{2}_{l}-u^{ii}u^{jj}u_{ijk}u_{ijl}u_{k}u_{l})+\sum_{\substack{k,l=1\\k\neq{l}}}^{n}\frac{f'(u)}{f(u)}u_{kk}u^{2}_{l}.
    \end{aligned}
\end{equation}
Substituting (\ref{e4}) into (\ref{e5}),~we obtain
\begin{equation}\label{A.5}
\begin{aligned}
    \sum_{i=1}^{n}u^{ii}\phi_{ii}=&\sum_{i,j,k,l,k\neq{l}}^{n}(u^{ii}u^{jj}u^{2}_{ijk}u^{2}_{l}-u^{ii}u^{jj}u_{ijk}u_{ijl}u_{k}u_{l})-\frac{4(n-1)}{n}\sum_{i=1}^{n}f^{\frac{2-n}{n}}(u)f'(u)u^{ii}u^{2}_{i}\\
    &+\frac{3f'(u)}{f(u)}\sum_{k=1}^{n}b^{kk}u^{2}_{k}+[4S_{2}(D^{2}u)-2n(n-1)f^{\frac{2}{n}}(u)],
\end{aligned}\end{equation}
where we have used
$$\sum_{\substack{k,l=1\\k\neq{l}}}^{n}\frac{f'(u)}{f(u)}u_{kk}u_{l}^{2}=\frac{f'(u)}{f(u)}\sum_{k=1}^{n}b^{kk}u_{k}^{2}.$$
By (\ref{e7})
\begin{equation}\label{e8}
S_{2}(D^{2}u)\geq\frac{n(n-1)}{2}(\det{D^{2}u})^{\frac{2}{n}}=\frac{n(n-1)}{2}f^{\frac{2}{n}}(u).
\end{equation}
By the Cauchy-Schwarz's inequality, we have
\begin{equation}\label{e9}
\sum_{k,l=1}^{n}u^{ii}u^{jj}(u^{2}_{ijk}u_{l}^{2}-u_{ijk}u_{ijl}u_{k}u_{l})\geq{0}.
\end{equation}
Next,~we claim that
\begin{equation}\label{e10}
\frac{3f'(u)}{f(u)}\sum_{k=1}^{n}b^{kk}u^{2}_{k}-\frac{4(n-1)}{n}\sum_{i=1}^{n}f^{\frac{2-n}{n}}(u)f'(u)u^{ii}u^{2}_{i}\geq{0}.
\end{equation}
Since $f'(u)\geq{0}$ and $f(u)>0$,~it suffices to prove that
\begin{equation}\label{e11}
\begin{aligned}
3\sum_{k=1}^{n}b^{kk}u^{2}_{k}-\frac{4(n-1)}{n}\sum_{i=1}^{n}f^{\frac{2}{n}}(u)u^{ii}u^{2}_{i}\geq{0}.
\end{aligned}
\end{equation}
Using $\frac{\lambda_{max}}{\lambda_{min}}\leq(\frac{3n}{4})^{\frac{n}{2(n-1)}}$, we have

$$\begin{aligned}
  3\sum_{k=1}^{n}b^{kk}u^{2}_{k}-\frac{4(n-1)}{n}\sum_{i=1}^{n}f^{\frac{2}{n}}(u)u^{ii}u^{2}_{i}&=\sum_{k=1}^{n}\left(3({S_{1}(D^{2}u)-u_{kk}})-\frac{4(n-1)}{n}f^{\frac{2}{n}}(u)\frac{1}{u_{kk}}\right)u^{2}_{k}\\
  &\geq\sum_{k=1}^{n}\left(3(n-1)\lambda_{min}-\frac{4(n-1)}{n}\frac{({\lambda_{max}})^{\frac{2(n-1)}{n}}}{({\lambda_{min}})^\frac{n-2}{n}}\right)u^{2}_{k}\\
  &\geq\sum_{k=1}^{n}\left(3(n-1)\lambda_{min}-\frac{4(n-1)}{n}\frac{3n}{4}\lambda_{min}\right)u^{2}_{k}=0,
\end{aligned}$$
thus we obtain (\ref{e11}). Combining (\ref{e8}),~(\ref{e9}) and (\ref{e10}),~we obtain
$$
\sum_{i=1}^{n}u^{ii}\phi_{ii}\geq{0}.
$$
Hence, by the maximum principle, $\phi$ attains its maximum on
$\partial\Omega$. Furthermore, if $\phi$ attains its maximum at an
interior point of $\Omega$, then $\phi$ is a constant in $\Omega$.

\noindent
Suppose first that $n=2$ and that $\phi$ attains its maximum at an
interior point of $\Omega$. Then, for any point $x_0\in\Omega$, we have
$$
\sum_{i=1}^2 u^{ii}\phi_{ii}=0.
$$
Since all the terms in the decomposition \eqref{A.5} are
nonnegative, each of them must vanish. In particular, since
$S_2(D^2u)=\det D^2u=f(u)$, the last term in \eqref{A.5} vanishes
identically, and hence
$$
-2f'(u)\sum_{i=1}^2u^{ii}u_i^2
+3\frac{f'(u)}{f(u)}
\sum_{k=1}^2b_{kk}u_k^2=0.
$$
For any point $x_0\in\Omega$, we have
$$
D^2u=
\begin{pmatrix}
u_{11} & 0\\
0 & u_{22}
\end{pmatrix},
\quad
u^{11}=\frac1{u_{11}},
\quad
u^{22}=\frac1{u_{22}},
$$
and
$$
b_{11}=u_{22},
\quad
b_{22}=u_{11},
\quad
f(u)=u_{11}u_{22}.
$$
Consequently,
$$
\begin{aligned}
&-2f'(u)\sum_{i=1}^2 u^{ii}u_i^2
+3\frac{f'(u)}{f(u)}
\sum_{k=1}^2 b_{kk}u_k^2
\\
&\quad
=
-2f'(u)
\left(
\frac{u_1^2}{u_{11}}
+\frac{u_2^2}{u_{22}}
\right)
+
3\frac{f'(u)}{u_{11}u_{22}}
\left(
u_{22}u_1^2+u_{11}u_2^2
\right)
\\
&\quad
=
f'(u)
\left(
\frac{u_1^2}{u_{11}}
+\frac{u_2^2}{u_{22}}
\right)
=
f'(u)\sum_{i=1}^2u^{ii}u_i^2.
\end{aligned}
$$
Hence
\begin{equation}\label{A10-new}
f'(u)\sum_{i=1}^2u^{ii}u_i^2=0
\quad\text{for any }x_0\in\Omega.
\end{equation}
Since $D^2u$ is positive definite, we have $u^{11}>0$ and $u^{22}>0$. Moreover, strict convexity and the boundary condition $u=0$ on
$\partial\Omega$ imply that $u$ has a unique critical point
$x^{*}\in\Omega$. Therefore,
$$
|Du(x_0)|>0
\quad\text{for any }x_0\in\Omega\setminus\{x^{*}\},
$$and hence, by \eqref{A10-new},
$$
f'(u(x_0))=0
\quad\text{for any }x_0\in\Omega\setminus\{x^{*}\}.
$$
By continuity,
$$
f'(u(x^{*}))=0.
$$
Thus
$$
f'(u(x_0))=0
\quad\text{for any }x_0\in\Omega.
$$
Consequently, $f$ is constant on the range $u(\Omega)$.
We next analyze the equality case in the inequality
\eqref{e9}. By Cauchy-Schwarz inequality, we have
$$\sum_{i,j=1}^{2} u^{ii} u^{jj} \left( u_{ij1}u_2 - u_{ij2}u_1 \right)^2=0.$$
Therefore,
\begin{equation}\label{A11-new}
\begin{cases}
u_{111}u_2-u_{112}u_1=0,\\
u_{122}u_2-u_{222}u_1=0,\\
u_{112}u_2-u_{122}u_1=0.
\end{cases}
\end{equation}
On the other hand, using $f'(u)=0$, we obtain from \eqref{e6}
\begin{equation}\label{A12-new}
\begin{cases}
u_{111}u^{11}+u_{122}u^{22}=0,\\
u_{112}u^{11}+u_{222}u^{22}=0.
\end{cases}
\end{equation}
We now show that all third-order derivatives of $u$ vanish. Fix
$x_0\in\Omega\setminus\{x^*\}$. Since $|Du(x_0)|>0$, at least one of
$u_1$ and $u_2$ is nonzero. Suppose first that $u_1\neq0$. Then the
first and third equations in \eqref{A11-new} give
$$
u_{112}=\frac{u_2}{u_1}u_{111},
\qquad
u_{122}=\frac{u_2}{u_1}u_{112}
=\frac{u_2^2}{u_1^2}u_{111}.
$$
Substituting these identities into the first equation in
\eqref{A12-new}, we obtain
$$
\left(
u^{11}+\frac{u_2^2}{u_1^2}u^{22}
\right)u_{111}=0.
$$
Since $u^{11},u^{22}>0$, it follows that
$$
u_{111}=u_{112}=u_{122}=0.
$$
The second equation in \eqref{A11-new} then yields $u_{222}=0$. If $u_1=0$, then $u_2\neq0$. In this case, the first and third equations
in \eqref{A11-new} directly give
$$
u_{111}=u_{122}=u_{112}=0,
$$
while the second equation in \eqref{A12-new} gives
$$
u^{22}u_{222}=0.
$$
Since $u^{22}>0$, we again obtain $u_{222}=0$. Therefore,
$$
u_{111}=u_{112}=u_{122}=u_{222}=0
\qquad\text{in }\Omega\setminus\{x^*\}.
$$
By continuity, we further have
$$
D^3u\equiv0\quad\text{in }\Omega.$$
Thus $D^2u$ is a constant positive definite matrix. Hence there exist
a symmetric positive definite matrix $A$, a point ${x^{*}}\in\mathbb R^2$,
and a constant $C\in\mathbb R$ such that
$$
u(x)=\frac12\langle A(x-{x^{*}}),x-{x^{*}}\rangle+C.
$$
Since $u=0$ on $\partial\Omega$ and $u<0$ in $\Omega$, it follows that
$$
\Omega
=
\left\{
x\in\mathbb R^2:
\langle A(x-{x^{*}}),x-{x^{*}}\rangle<-2C
\right\}.$$
Therefore, $\Omega$ is an ellipse and $u$ is elliptically symmetric.

\noindent
Suppose now that $n\geq 3$ and that $\phi$ attains its maximum at an
interior point of $\Omega$. Then, for any $x_0\in\Omega$, we have
$$
\sum_{i=1}^n u^{ii}\phi_{ii}=0.
$$
Since all the terms in the decomposition \eqref{A.5} are
nonnegative, equality must hold in each of them. In particular,
equality holds in the Maclaurin inequality \eqref{e8}. Hence, for
any point $x_0\in\Omega$, we define
$$
\lambda_1(x_0)=\lambda_2(x_0)=\cdots=\lambda_n(x_0)=:\lambda(x_0),
$$
where $\lambda_1(x_0),\ldots,\lambda_n(x_0)$ are the eigenvalues of $D^2u(x_0)$.
Consequently,
$$
D^2u(x_0)=\lambda(x_0) I,
\qquad
\lambda^n(x_0)=\det D^2u(x_0)=f(u(x_0)),
$$
and therefore
$$
\lambda(x_0)=f(u(x_0))^{1/n}.
$$
Thus
$$
u^{ii}(x_0)=\frac1{\lambda(x_0)}=f(u(x_0))^{-1/n},
\qquad
b_{kk}=(n-1)\lambda(x_0)=(n-1)f(u(x_0))^{1/n},
\qquad k=1,\ldots,n.
$$
Hence, for any $x_0\in\Omega$, the expression in \eqref{e10} becomes
$$
\begin{aligned}
&3\frac{f'(u)}{f(u)}
\sum_{k=1}^n b_{kk}u_k^2
-\frac{4(n-1)}{n}
f(u)^{\frac{2-n}{n}}f'(u)
\sum_{i=1}^n u^{ii}u_i^2
\\
&\quad
=
3(n-1)f'(u)f(u)^{\frac1n-1}|Du|^2
-\frac{4(n-1)}{n}
f'(u)f(u)^{\frac{1-n}{n}}|Du|^2
\\
&\quad
=
\frac{(n-1)(3n-4)}{n}
f'(u)f(u)^{\frac1n-1}|Du|^2.
\end{aligned}
$$
Since $f>0$ and $f'\geq 0$, this quantity is nonnegative.
Moreover, equality must hold in \eqref{e10}. Therefore,
$$
f'(u)|Du|^2=0
\quad\text{for any }x_0\in\Omega.
$$
Since $u$ is strictly convex, it has a unique critical point
$x^{*}\in\Omega$. Thus
$$
|Du(x_0)|>0
\quad\text{for any }x_0\in\Omega\setminus\{x^{*}\},
$$
and consequently
$$
f'(u(x_0))=0
\qquad\text{for any }x_0\in\Omega\setminus\{x^{*}\}.
$$
By continuity,
$$
f'(u(x^{*}))=0.
$$
Hence
$$
f'(u(x_0))=0
\quad\text{for any }x_0\in\Omega.
$$
Therefore, $f$ is constant on the range $u(\Omega)$. In particular,
there exists a constant $c_0>0$ such that
$$
f(u)\equiv c_0
\quad\text{in }\Omega.
$$
Consequently,
$$
D^2u=c_0^{1/n}I
\quad\text{in }\Omega.
$$
Thus
$$
u(x)=\frac{c_0^{1/n}}{2}|x-x^{*}|^2+C
$$
for some $x^{*}\in\mathbb R^n$ and $C\in\mathbb R$. Since
$u=0$ on $\partial\Omega$, it follows that $\Omega$ is a ball with radius $R$ and center at $x^{*}$ and $C=-\frac{c_0^{1/n}R^2}{2}$.
Hence $\Omega$ is a ball and $u$ is spherically symmetric.
\end{proof}

With Lemma \ref{L1} in hand, we are ready to show the proof of Theorem \ref{TH2-new}.

\begin{proof}[Proof of Theorem \ref{TH2-new}]
Define
$$F(s):=\int_0^s f(t)^{\frac2n}\,dt$$
and
$$\varphi(x)=
H_1|Du|^3-2(n-1)F(u).
$$
By \eqref{F7},
$$\varphi(x)=
\sum_{i,j=1}^n
\frac{\partial S_2(D^2u)}{\partial u_{ij}}u_i u_j
-2(n-1)\int_0^u f(t)^{\frac2n}\,dt.
$$
Hence, by Lemma \ref{L1}, $\varphi$ attains its maximum on
$\partial\Omega$. Since $u=0$ and $H_1|Du|^3=c$ on $\partial\Omega$, we have
$\varphi=c$ on $\partial\Omega$. Therefore, $\varphi\leq c$ in $\Omega$. By the strong maximum principle in Lemma \ref{L1}, either $\varphi<c$ in $\Omega$ or
$\varphi\equiv c$ in $\Omega$. Suppose, by contradiction, that
$\varphi<c$ in $\Omega$. Integrating over $\Omega$, we obtain
\begin{equation}\label{eq:new-proof-1}
\int_\Omega
\left(
H_1|Du|^3-2(n-1)F(u)
\right)dx
<c|\Omega|.
\end{equation}
Since $u=0$ on $\partial\Omega$, using \eqref{F7} and \eqref{F2}, we obtain
$$
\int_\Omega H_1|Du|^3\,dx
=
\int_\Omega
\sum_{i,j=1}^n
\frac{\partial S_2(D^2u)}{\partial u_{ij}}u_i u_j\,dx \notag=
-2\int_\Omega uS_2(D^2u)\,dx.
$$
Consequently, \eqref{eq:new-proof-1} yields
\begin{equation}\label{eq:new-proof-3}
-2\int_\Omega
\left[
uS_2(D^2u)+(n-1)F(u)
\right]dx
<c|\Omega|.
\end{equation}
On the other hand, since $u$ is convex and $
\det D^2u=f(u)>0$, we have $D^2u>0$ in $\Omega$, and hence $u$ is strictly convex. Since $u=0$ on $\partial\Omega$, it follows that $u<0$ in $\Omega$. By the Maclaurin inequality \eqref{e7},
$$
S_2(D^2u)
\geq
\frac{n(n-1)}{2}
\left(\det D^2u\right)^{\frac2n}
=
\frac{n(n-1)}{2}f(u)^{\frac2n}.
$$
Hence,
$$
-2uS_2(D^2u)\geq
-n(n-1)u f(u)^{\frac2n},
$$
and
$$
-2\left[
uS_2(D^2u)+(n-1)F(u)
\right]
\geq
(n-1)
\left[
-nu f(u)^{\frac2n}-2F(u)
\right].
$$
Integrating over $\Omega$ and using \eqref{1.10-new}, we get
\begin{align}
-2\int_\Omega
\left[
uS_2(D^2u)+(n-1)F(u)
\right]dx
\geq
(n-1)
\int_\Omega
\left[
-nu f(u)^{\frac2n}
-2\int_0^u f(t)^{\frac2n}\,dt
\right]dx \notag\geq c|\Omega|.
\end{align}
This contradicts \eqref{eq:new-proof-3}. Therefore,
$$
\varphi\equiv c\qquad\text{in }\Omega.
$$
By Lemma \ref{L1}, we obtain that $\Omega$ is an ellipse when $n=2$ and a ball when $n\geq3$.
Moreover, $u$ is elliptically symmetric when $n=2$ and spherically
symmetric when $n\geq3$.
\end{proof}
\vspace{3mm}

\section{Conclusion}\label{sec6}
We investigate the overdetermined problem for optimal transportation under the natural boundary conditions $Du(\Omega)=B$, $Du(\Omega)=\Omega$ and $Du(\Omega)=\Omega^{*}$, respectively. In the latter two cases, we impose an additional volume constraint $|\Omega|\leq\omega_n$ and a boundary condition $|Du|\geq1$. A natural question is whether these additional assumptions can be relaxed. A complete understanding of this issue would require a more detailed analysis of the interaction between the transport map and the geometry of the boundary. We leave this interesting question for future investigation.

\vspace{3mm}

\vspace{5mm}

\noindent {\bf Acknowledgements.}
The first author is grateful to Prof. Xiao-Ping Yang for insightful discussions. Both authors also wish to express their sincere gratitude to him for his long-term encouragement and guidance in this research direction.
This work has been supported by the National Natural Science Foundation of China (Grant No. 12271093), the Jiangsu Provincial Scientific Research Center of Applied Mathematics (Grant No. BK20233002), and Shanghai Institute for Mathematics and Interdisciplinary Sciences (SIMIS) under grant number SIMIS-ID-2025-AD.

\bibliographystyle{amsplain}

\end{document}